\newtheorem{theorem}{Theorem}[section]
\newtheorem{lemma}[theorem]{Lemma}
\newtheorem{proposition}[theorem]{Proposition}
\newtheorem{corollary}[theorem]{Corollary}
\theoremstyle{definition}
\newtheorem{remark}[theorem]{Remark}
\newtheorem{example}[theorem]{Example}
\newcommand{\NN}{\mathbb{N}}
\newcommand{\ZZ}{\mathbb{Z}}
\newcommand{\RR}{\mathbb{R}}
\newcommand{\ct}{\operatorname{ct}}
\newcommand{\KW}{\operatorname{KW}}
\newcommand{\HMT}{\operatorname{HMT}}
\newcommand{\SL}{\mathrm{SL}}
\newcommand{\Pure}[1]{\mathrm{Pure}_{\pi_1\neq 1}^{#1}}
\newcommand{\zz}[1]{}
\begin{document}

\title[Fundamental groups of small simplicial complexes]{Fundamental groups of small simplicial complexes}
\date{}

\author[D. Govc]{Dejan Govc$^{*,**}$}
\author[W. Marzantowicz]{Wac{\l}aw Marzantowicz$^{***}$}
\author[{\L}. P. Michalak]{\\{\L}ukasz Patryk Michalak$^{**,***}$}
\author[P. Pave{\v s}i{\'c}]{Petar Pave{\v s}i{\'c}$^{*,**}$}

\thanks{$^{*,**}$ Supported by the Slovenian Research Agency program P1-0292 and grant J1-4001.}
\thanks{$^{***}$ Supported by the Polish Committee of Scientific Research Grant
Sheng 1 UMO-2018/30/Q/ST1/00228.}

\address{$^{*}$ \;Faculty of Mathematics and Physics, University of Ljubljana,  Jadranska 19, 1000 Ljubljana, Slovenija}
\email{dejan.govc@gmail.com, ORCID: \href{https://orcid.org/0000-0002-7156-7966}{0000-0002-7156-7966}}
\email{petar.pavesic@fmf.uni-lj.si, ORCID: \href{https://orcid.org/0000-0001-7827-9900}{0000-0001-7827-9900}}

\address{$^{**}$ \;Institute of Mathematics, Physics and Mechanics, Jadranska 19, 1000 Ljubljana, Slovenija}

\address{$^{***}$ \;Faculty of Mathematics and Computer Science, Adam Mickiewicz University,	Pozna{\'n}, ul. Uniwersytetu Pozna{\'n}skiego 4, 61-614 Pozna{\'n}, Poland}

\email{marzan@amu.edu.pl, ORCID: \href{https://orcid.org/0000-0001-5933-9955}{0000-0001-5933-9955}}
\email{lukasz.michalak@amu.edu.pl, ORCID: \href{https://orcid.org/0009-0002-4821-3809}{0009-0002-4821-3809}}

\subjclass[2020]{55U10, 20F65;  Secondary 55M30, 57-04, 57Q15}
\keywords{Fundamental group, simplicial complex, triangulation, homology sphere, algorithm}

\zz{55U10 - Simplicial sets and complexes in algebraic topology
20F65 - Geometric group theory
57Q15 - Triangulating manifolds
57-04 - Software, source code, etc. for problems pertaining to manifolds and cell complexes
55.40 - Homotopy theory, general

05E45 - Combinatorial aspects of simplicial complexes
20F05 - Generators, relations, and presentations of groups
20F34 - Fundamental groups and their automorphisms (group-theoretic aspects)
57M05 - Fundamental group, presentations, free differential calculus}
\maketitle

\centerline{\emph{Dedicated to the memory of Frank Hagen Lutz}}

\begin{abstract}
The number of nonisomorphic simplicial complexes with up to $n$ vertices increases super-exponentially with $n$,
which makes exhaustive computation of invariants associated with such complexes a daunting task. In this paper we
provide a complete list of groups that arise as fundamental groups of simplicial complexes with at most $8$ vertices. In addition we give many examples
of fundamental groups of complexes with $9$ vertices although the complete classification seems to be beyond
reach at the moment.

Our results lead to many applications, including progress on the Bj\"orner--Lutz\linebreak
conjecture regarding vertex-minimal triangulations of the Poincar\'e homology sphere,\linebreak improved recognition criteria for
PL triangulations of manifolds and computation of the Karoubi--Weibel invariant for many groups.
\end{abstract}

\section{Introduction}\label{sec:Introduction}

Fundamental group is the most important homotopy invariant of low
dimensional spaces and spaces that admit small triangulations
as it gives a lot of information about their structure and often completely determines their homotopy type.
In this paper we consider the problem of which groups arise as fundamental groups of simplicial
complexes on a given number of vertices. For simplicial complexes
with at most $8$ vertices we obtain a complete list of their fundamental groups:\\[2mm]
{\bf Theorem \ref{thm:n=8}} \emph{
An abstract group can be realized as the fundamental group of a simplicial complex $K$ on at most $8$ vertices if, and only if, it
is of the form $\pi_1(K)\cong G\ast F_r,$ where $G$ is one of the following groups:
\begin{itemize}
	\item $\ZZ_2$, $\ZZ_3$, $\ZZ_4$,
	\item $\ZZ\times\ZZ$, $\ZZ\rtimes\ZZ$ (fundamental groups of the torus and the Klein bottle, respectively),
	\item $B_3$ (the braid group on $3$ strands or
	the trefoil knot group),
\end{itemize}
and $F_r$ is a free group of rank 
$r \leq 21$ where the range of admissible ranks $r$ depends on $G$ and admits the value $0$ in every case.
}

\

Passing to the next case, there are about $7.89\times 10^{35}$ isomorphism classes of simplicial
complexes and about $5.33\times 10^{19}$
isomorphism classes of $2$-pure simplicial complexes on $9$ vertices. Even after several geometric
reductions of the number of cases that
need to be considered we were not able to check all of them. Nevertheless, our computations gave
many new fundamental groups
that are listed below. For clarity we omit the free factors that arise from the addition
of edges to a given triangulation, which can be done if the $1$-skeleton is not yet a complete graph.
We also omit the previously listed fundamental groups of complexes with $8$ vertices.\\[2mm]
{\bf Theorem \ref{result:9_vertices}} \emph{The following groups can be realized as fundamental groups of simplicial complexes on $9$ vertices:
\begin{itemize}
	\item $\ZZ_m$ for $m\in \{5,6,7,8,9\}$,
	\item $\ZZ_2 \ast \ZZ_2 = D_\infty$ (infinite dihedral group),
	\item $\ZZ_2 \ast \ZZ_3$,
	\item $\ZZ \times \ZZ_m$ for $m\in \{2,3\}$,
	\item $D_6$ (dihedral group of order $6$),
	\item $Q_8$ (quaternion group of order $8$),
	\item $BS(2,1)$, $BS(2,2)$, $BS(2,-2)$, $BS(3,1)$, $BS(3,-1)$ (Baumslag--Solitar groups),
	\item $\pi_1(M_2)$ (the fundamental group of a closed orientable surface of genus $2$),
	\item $\pi_1(N_g)$ for $g\in \{3,4,5\}$ (the fundamental group of a closed non-orientable surface of genus $g$),
	\item $\pi_1(X_{2,4}) = \langle\, a,\, b\, |\, a^4 = b^2\, \rangle$ (see \cite[Example 1.24]{Hatcher}).
\end{itemize}}

There are many problems that can be reduced to the size of the
simplicial complex that realizes a given fundamental group. For
example, the Bj\"orner--Lutz conjecture \cite{Bjorner-Lutz}
states that every triangulation of the Poincar\'e homology sphere
requires at least $16$ vertices. That value comes from a specific
triangulation obtained through a computer search.
B.~Bagchi and B. Datta \cite{Bagchi-Datta} proved that every such triangulation requires at least $12$ vertices. Based on our computations
we are able to obtain the following improvement:\\[2mm]
{\bf Theorem \ref{thm:Poincare}} \emph{
Every triangulation of the Poincar\'e homology sphere
requires at least $13$ vertices (and at least $14$, for triangulations that admit at least one bistellar flip).}

\

As an immediate consequence we obtain a useful recognition
criterion for PL triangulations that, roughly speaking, states that every triangulation of a $4$-dimensional manifold
in which the link of each vertex has at most $12$ vertices, is
a PL triangulation.
Another application of our result is the computation of the
Karoubi--Weibel invariant of many groups. We will give more
details below, after the introduction of suitable definitions.

\subsection{Prior work.}

Fundamental group is particularly important for the study of low-dimensional spaces and spaces with a small number of cells. It also represents the main bridge between topology and group theory, which leads to the study
of topological invariants associated to groups.
Here we will give a brief overview of the results that are
closely related to our work.

Our initial motivation for this work was a conjecture
regarding the minimal triangulation of the Poincar\'e homology sphere.
A. Bj\"orner and F. H. Lutz \cite{Bjorner-Lutz} found a triangulation of the Poincar{\'e} sphere with $16$ vertices. They started from known $18$ and $24$-vertex triangulations and used their bistellar flip program which randomly (with some priority) performs bistellar moves in order
to reduce the size of the triangulation. As extensive search failed to
find further reductions, they conjectured \cite[Conjecture 6]{Bjorner-Lutz} that $16$ is in fact the minimum number of vertices needed to triangulate the Poincar{\'e} sphere.

It follows from the work of D. W. Walkup \cite{Walkup} that every triangulation of the Poincar\'e sphere requires at least $11$ vertices,
so the gap between the lower and upper bound is quite large. In \cite{Bagchi-Datta}
B. Bagchi and B. Datta  studied the collapsibility
of small acyclic complexes and deduced that at least $12$ vertices
are needed for a triangulation. In \cite{Pavesic} the fourth author studied
minimal triangulations of manifolds whose fundamental group is not
free. His results indicated that a more detailed study of possible
triangulations of the binary icosahedral group (the fundamental group
of the Poincar\'e sphere) could lead to some progress on the Bj\"orner--Lutz conjecture.

A different line of studies that led to this work was initiated
by M. Karoubi and C.~Weibel \cite{Karoubi-Weibel} who studied
good covers of spaces and introduced the \emph{covering type} $\ct(X)$
of a space $X$ as the minimal number of elements in a good open
cover of a space homotopy equivalent to $X$. Covering type is
a homotopy invariant of the space and is related to minimal triangulations
via the Nerve Theorem. In fact by \cite[Theorem 1.2]{GMP}, $\ct(X)$ equals the number of
vertices in a minimal homotopy triangulation of $X$. I. Babenko, F. Balacheff and G. Bulteau
\cite{BabenkoBB} related minimal triangulations to group theory by defining the
\emph{simplicial complexity} of a group $G$ as the minimal number
of facets in a $2$-dimensional simplicial complex whose fundamental group is isomorphic to $G$. They used
the simplicial complexity of $G$ as a discrete estimate of Gromov's systolic area of $G$ (see
\cite{BabenkoBB} for further details). Simplicial complexity of $G$ is directly related to the minimal
number of vertices in a $2$-dimensional
simplicial complex $K$ whose fundamental group is $\pi_1(K)\cong G$,
which led I. Babenko and T. Moulla \cite{Babenko-M} to introduce the
\emph{Karoubi--Weibel invariant} $\KW(G)$ as
\[
KW(G) = {\underset{\pi_1(X)=G}{\,\min\,}} \ct(X).
\]
In this paper, we will, among other things, greatly extend the computations of $\KW(G)$ given in \cite{Babenko-M}.
At the moment, most known results about $\KW(G)$ describe its asymptotic behavior for groups in a particular family. While the upper bound can be  obtained
constructively through a specific realization of the complex with a given group, it is usually much harder to find meaningful lower bounds. One such instance appears
in a recent paper of F. Frick and M. Superdock \cite{Frick-Superdock}, who give an asymptotic lower bound $k^{3/4}$ for $\KW(\ZZ^k)$. In addition, they describe
a family of complexes whose fundamental group is $\ZZ^k$ with $4k+(-1)^{k+1}$ vertices, so the growth of $\KW(\ZZ^k)$ is at most linear in $k$.

There are several related results on homology groups. A.~Newman \cite{Newman_torsion} carried out asymptotic analysis of the torsion in homology groups.
For example, for a given finite abelian group $G$ and $d\geq 2$ Newman estimated the minimum number of vertices $T_d(G)$ of a simplicial complex such that
the torsion part of its $(d-1)$st integral homology group is isomorphic to $G$. He showed that $T_d(G)$ is asymptotically $(\log|G|)^{1/d}$ which for $d=2$ yields a lower asymptotic estimate for $\KW(G)$. Further asymptotic results were obtained by D. Lofano and F.\,H. Lutz \cite{Lofano-Lutz}, who gave a construction, using Hadamard matrices, of a family of complexes $\HMT(n)$ with $5n-1$ vertices such that $|H_1(\HMT(n))|=n^{\frac n2}$.

\subsection{Our contributions.}

We present a theoretical and computational approach that allows us to determine all isomorphism types
of fundamental groups of simplicial complexes on up to $8$ vertices, up to a free group factor.
As mentioned, the growth
of the number of nonisomorphic simplicial complexes on a given
number of vertices is so fast that it makes direct computation of
all possible cases unfeasible. This problem may be somewhat mitigated
by a suitable reduction of the number of cases that need to be considered.
Thus our geometric reductions given in sections \ref{subsec:scx up to 7} and \ref{subsec:General structural properties} 
are of independent interest and could be applied in other similar situations.
While the number of nonisomorphic simplicial complexes on $7$ vertices
is already quite big (around $490$ million) our geometric simplifications
allow us to examine all possible cases purely by theoretical means.
The case of $8$ vertices presents the first significant difficulty due to the number of
(about $1.4\times 10^{18}$ nonisomorphic) complexes and group presentations.
By implementing our algorithm and running a computer program, we determined a complete list of fundamental groups
of these complexes as given in Theorem \ref{thm:n=8}.
For $9$ vertices the number of nonisomorphic simplicial complexes is
about $7.89\times 10^{35}$, so our geometric reductions are
unfortunately not sufficient to examine all the relevant cases. Despite this, we managed to identify many
new fundamental groups. These are listed in Theorem \ref{result:9_vertices}.

The introduction of the Karoubi--Weibel invariant in \cite{Babenko-M}
was motivated by its relation to Gromov's systolic area of a group.
However, apart from some elementary cases, there are very few instances of groups $G$ for which the precise value of $\KW(G)$
is known. Among the known examples, we can mention the nonabelian free groups (fundamental groups of $1$-dimensional complexes),
the cyclic group $\ZZ_2$ with $\KW(\ZZ_2)=6$ (minimal triangulation of the projective plane) and the free abelian group
on two generators with $\KW(\ZZ^2)=7$ (minimal triangulation of the torus). However, vertex-minimality of triangulations is a delicate issue.
For example, it is not immediately clear that the vertex-minimal triangulations
of surfaces are also the vertex-minimal $2$-complexes for the
corresponding fundamental groups. We will tackle this problem in
a forthcoming paper.  In this paper we
completely determine all groups whose Karoubi--Weibel invariant
is less than or equal to $8$ and provide many examples of groups whose
Karoubi--Weibel invariant is $9$.

Among the specific consequences of our computations is that for
the binary icosahedral group $\SL(2,5)$ we have $\KW(\SL(2,5))\ge 9$.
Combined with some geometric considerations this implies that every
triangulation of the Poincar\'e sphere requires at least $13$ vertices.
If~it turns out, as we expect, that $\KW(\SL(2,5))=10$, it would
improve the lower bound for such a triangulation to $14$ (or $15$ for
triangulations that admit a bistellar flip). Furthermore, our results
also show that $\KW(G)\ge 9$ for any nontrivial perfect group $G$, which
implies that a combinatorial triangulation of a $d$-dimensional homology
sphere (over $\ZZ$) requires at least $d+10$ vertices. This improves the
result of Bagchi and Datta \cite{Bagchi-Datta} by $1$.

\subsection{Organization of the paper.}
In Section \ref{sec:preliminary} we review the results on the number of simplicial complexes on a given number of vertices,
according to different types or relations: all complexes (Dedekind numbers), their isomorphism classes
(reduced Dedekind numbers), $2$-pure complexes (the number of $3$-uniform hypergraphs) and homotopy
types (A. Newman's result). In Section \ref{sec:up_to_7_ver} we discuss the fundamental groups of complexes
with up to $7$ vertices. These can be determined either by direct geometric arguments or by using standard
software. Section \ref{sec:8_ver} contains the main result
of the paper, namely the classification of fundamental groups of complexes on $8$ vertices. We establish a structural result
for complexes (Theorem \ref{theorem:structural_theorem}) that allows us to obtain every occurring group up to a free group factor,
and we discuss the algorithm and its implementation. We also present the consequences of these results for the vertex-minimal triangulations of the
Poincar{\'e} sphere and the covering type of the complement of trefoil knot. In Section \ref{sec:applications} we discuss some
computational issues that arise in the analysis of complexes on $9$ vertices and present
many examples of groups whose Karoubi--Weibel invariant is equal to $9$. We conclude with some further applications related
to aspherical spaces and to recognition of combinatorial triangulations.

\subsection{Acknowledgements.}
The collaboration on this project was initiated during the workshop \emph{Some problems
in applied and computational topology} in March 2023 in B\k{e}dlewo  with the kind support
of the Banach Center. Sadly, this turned out to be the last time that we had the opportunity
to enjoy the company and discuss mathematics with Frank Lutz,
who unexpectedly passed away on 10th of  November 2023.

\section{Numbers of complexes of different types}\label{sec:preliminary}

\subsection{Basic facts and notions}
Let $K$ be a simplicial complex. We say that:
\begin{itemize}
	\item a simplex of $K$ is \emph{maximal} if it is not a proper face of another simplex of $K$.
	\item a simplex of $K$ is \emph{free} if it is a proper face of exactly one simplex of $K$.
	\item $K$ is \emph{$n$-pure} if every simplex in $K$ is a face of an $n$-dimensional simplex of $K$.
\end{itemize}
We will always assume that $K$ is non-empty and is realized in some Euclidean space.

By the Grushko decomposition theorem \cite[Section 4.1]{Magnus-book}
every finitely generated group $G$ can be decomposed as a free product
\[
G = G_1 \ast G_2 \ast \ldots G_m \ast F_r,
\]
where each $G_i$ is a nontrivial, freely indecomposable group, and $F_r$ is a free group of rank
$r\geq 0$ ($F_0 = 1$ is the trivial group and $F_1 = \ZZ$). The decomposition is unique up to
isomorphism and permutation of groups $G_i$. Observe that the addition of an edge to $K$
(which is possible if the 1-skeleton of $K$ is not a complete graph)
increases the free rank of its fundamental group by $1$,
so we will be mostly interested in the non-free part of
$\pi_1(K)$.

\subsection{Dedekind numbers}\label{subsec:Dedekind numbers}
Let us begin by reviewing the numbers of simplicial complexes that
need to be examined.

The $n$th \emph{Dedekind number} $d_n$ is the number of all monotone Boolean functions on $n$ variables. It has several equivalent reformulations (cf. \cite{Jakel,Kisielewicz,Hirtum}), one of them is that it equals the number of all abstract simplicial complexes on $n$ vertices, plus $1$. Since the empty set is also counted as an abstract simplicial complex, it follows that $d_n-2$ is the number of all (geometric) simplicial complexes on $n$ vertices.

The exact values of $d_n$ are known only for $n=0,1\ldots,9$ (the sequence \href{https://oeis.org/A000372}{A000372} in the The On-Line Encyclopedia of Integer Sequences (OEIS) \cite{OEIS}):
\begin{align*}
	2, 3, 6, 20, 168, 7581, 7828354, 2414682040998, 56130437228687557907788, \\ 286386577668298411128469151667598498812366.
\end{align*}
The last number in the series, $d_9 \approx 2.86 \times 10^{41}$ was determined only recently with the help of a supercomputer by C.~J{\"a}kel~\cite{Jakel} and L. Van Hirtum et al. \cite{Hirtum}.

It is interesting to note that in 1988  A. Kisielewicz \cite{Kisielewicz} found an explicit summation formula for $d_n$:
  \[d_n=\sum_{k=1}^{2^{2^n}}\prod_{j=1}^{2^n-1}\prod_{i=0}^{j-1}\Big(1- b_i^kb^k_j\prod_{m=0}^{\log_2i}(1-b_m^i+b_m^ib_m^j)\Big),\] where $b_i^k=\lfloor k/2^i\rfloor-2\lfloor k/2^{i+1}\rfloor$,  and
  $\lfloor - \rfloor$  is the floor function.
 However, the summation is performed over too many terms to be useful if $n$ is large (even if $n=8$).

\subsection{Number of isomorphism classes of simplicial complexes}

A variant of the above is the $n$th {\em reduced Dedekind number} $r_n$, which is one more than the number of isomorphism classes of abstract simplicial complexes on $n$ vertices.

The values of $r_n$ are also known only for $n=0,1,\ldots,9$ (the sequence \href{https://oeis.org/A003182}{A003182} in the OEIS \cite{OEIS}):
\begin{align*}
	2, 3, 5, 10, 30, 210, 16353, 490013148, 1392195548889993358, \\ 789204635842035040527740846300252680.
\end{align*}
The last of these $r_9\approx 7.89\times 10^{35}$ was recently computed by B. Pawelski \cite{Pawelski}.

\subsection{Number of isomorphism classes of uniform hypergraphs}
In order to compute fundamental groups up to a free factor, it is enough
to consider only $2$-pure simplicial complexes in which all maximal faces are triangles (cf. Lemma \ref{lemma:2-pure}). This combinatorial description can be expressed in terms of hypergraphs: a $2$-pure complex corresponds uniquely to a hypergraph on $n$ vertices with each hyperedge of order $3$. Such hypergraphs are called \emph{$3$-uniform}.

The problem of enumeration of isomorphism classes of uniform hypergraphs is classical --- see the book \cite{Harary-Palmer} of F. Harary and E.\,M. Palmer or the paper \cite{Qian} of J.~Qian providing an explicit formula by using P{\'o}lya’s counting theory and Burnside’s counting lemma. The numbers $h_3(n)$ of isomorphism classes of $3$-uniform hypergraphs on $n$ vertices form the sequence \href{https://oeis.org/A000665}{A000665} in the OEIS \cite{OEIS}. For $n=0,1,\ldots,9$ they are as follows:
\[
1, 1, 1, 2, 5, 34, 2136, 7013320, 1788782616656, 53304527811667897248.
\]
Note that these classes include the empty hypergraph.

Let $\mathcal{P}(n)$ denote the set of all partitions of a natural number $n$. A partition $P\in \mathcal{P}(n)$ can be written in two forms $P: p_1 + p_2 + \cdots + p_q = n = 1\alpha_1 + 2\alpha_2 + \cdots + n\alpha_n$, where $\alpha_i$ is the number of integers $i$ in the partition $P$.

\begin{theorem}[{\cite[Theorem 2.4 and Corollary 2.6]{Qian}}]
	The number of isomorphism classes of $3$-uniform hypergraphs on $n$ vertices is equal to
	\[
	h_3(n) = \sum_{P \in \mathcal{P}(n) } \frac{1}{1^{\alpha_1}2^{\alpha_2}\cdots n^{\alpha_n}\alpha_1!\alpha_2!\cdots\alpha_n!} 2^{\tau_3(P)},
	\]
	where
	\begin{align*}
	\tau_3(P) &= \sum_{i=1}^q \left\lceil{\frac{(p_i-1)(p_i-2)}{6}}\right\rceil 
	+ \sum_{i<j<h \in \{1,\ldots,q\}} \frac{p_i p_j p_h}{{\rm lcm}(p_i,p_j,p_h)}
	\\&+
	\frac{1}{2}\sum_{i<j} \gcd(p_i,p_j)\left(p_i+p_j-2+\frac{1}{2} \left|(-1)^{{\rm lcm}(p_i,p_j)/p_i} - (-1)^{{\rm lcm}(p_i,p_j)/p_j} \right| \right).
	\end{align*}
\end{theorem}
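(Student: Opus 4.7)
The plan is to apply Burnside's counting lemma to the natural action of the symmetric group $S_n$ on the set $\mathcal{H}$ of all $3$-uniform hypergraphs with vertex set $[n]=\{1,\ldots,n\}$. Since $\mathcal{H}$ is in bijection with the power set of $\binom{[n]}{3}$ and its isomorphism classes are precisely the $S_n$-orbits, one has
$$h_3(n) = \frac{1}{n!} \sum_{\sigma\in S_n} |\mathrm{Fix}(\sigma)|.$$
Grouping permutations by cycle type and using the classical formula for the size of each conjugacy class in $S_n$ turns this into a sum indexed by partitions $P\in\mathcal{P}(n)$ with exactly the prefactor $1/(1^{\alpha_1}\cdots n^{\alpha_n}\alpha_1!\cdots\alpha_n!)$. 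A hypergraph is fixed by $\sigma$ if and only if it is a union of $\langle\sigma\rangle$-orbits on $\binom{[n]}{3}$, so $|\mathrm{Fix}(\sigma)|=2^{\tau_3(P)}$ where $\tau_3(P)$ denotes the number of such orbits. It then remains to derive the displayed expression for $\tau_3(P)$.

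For this I would partition the $3$-subsets of $[n]$ according to how they meet the cycles $C_1,\ldots,C_q$ of $\sigma$ (of lengths $p_1,\ldots,p_q$): either all three elements lie in a single cycle $C_i$, or in three pairwise distinct cycles $C_i,C_j,C_h$, or two lie in one cycle $C_i$ and the third in a different cycle $C_j$. In the first case, counting $\langle\sigma\rangle$-orbits on $\binom{C_i}{3}$ reduces to the cyclic action of $\ZZ/p_i$ on $3$-subsets of a $p_i$-cycle; a short Burnside computation shows that the only nontrivial rotations with fixed triples occur when $3\mid p_i$ (namely the two rotations of order $3$, each fixing the $p_i/3$ equilateral triples $\{x,x+p_i/3,x+2p_i/3\}$), and in both cases the orbit count evaluates to $\lceil(p_i-1)(p_i-2)/6\rceil$. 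This accounts for the first term of $\tau_3(P)$. In the second case, $\langle\sigma\rangle$ acts freely on the $p_ip_jp_h$ triples through its quotient of order $\mathrm{lcm}(p_i,p_j,p_h)$, which yields $p_ip_jp_h/\mathrm{lcm}(p_i,p_j,p_h)$ orbits and hence the second term.

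The third case is the main obstacle, because the pair $\{a,a'\}\subset C_i$ carries an internal $\ZZ/2$-symmetry that can be realized by an element of $\langle\sigma\rangle$ only under delicate arithmetic conditions. I would first fix an offset $d\in\ZZ/p_i\setminus\{0\}$ for the pair (so $a'=a+d$), and show by a standard lattice argument that the $\langle\sigma\rangle$-orbits of triples with that fixed offset are in bijection with a set of cardinality $\gcd(p_i,p_j)$. Pairing the offsets $d$ and $-d$, which yield the same unordered pair, reduces the contribution per ordered pair of cycles $(C_i,C_j)$ to $\frac{1}{2}(p_i-1)\gcd(p_i,p_j)$; symmetrizing over which of the two cycles contains the pair gives the base coefficient $\frac{1}{2}(p_i+p_j-2)\gcd(p_i,p_j)$. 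The remaining correction accounts for the self-paired offset $d=p_i/2$, which is present exactly when $p_i$ is even: at this offset an extra identification occurs precisely when $\sigma^{\mathrm{lcm}(p_i,p_j)/2}$ acts trivially on $C_j$, that is, when $\mathrm{lcm}(p_i,p_j)/p_j$ is even while $\mathrm{lcm}(p_i,p_j)/p_i$ is odd, or vice versa. A direct parity case analysis then converts this indicator into the expression $\frac{1}{4}\bigl|(-1)^{\mathrm{lcm}(p_i,p_j)/p_i}-(-1)^{\mathrm{lcm}(p_i,p_j)/p_j}\bigr|$, producing the third term of $\tau_3(P)$ and completing the formula.
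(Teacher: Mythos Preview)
The paper does not give its own proof of this statement: it is quoted from Qian's paper, and the surrounding text explicitly says that Qian obtains the formula ``by using P\'olya's counting theory and Burnside's counting lemma.'' Your outline is exactly that standard argument, so there is nothing to compare --- you are reproducing Qian's method.

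Your sketch is essentially correct. The Burnside setup, the conjugacy-class reduction, and the three-way split of $3$-subsets according to how many cycles they meet are all right, and your computations in the one-cycle and three-cycle cases are clean. In the two-cycle case your arithmetic is also correct, but the wording is slightly misleading: when $\sigma^{\mathrm{lcm}(p_i,p_j)/2}$ already realizes the swap of the pair $\{a,a+p_i/2\}$ while fixing $b$, the two ordered $\langle\sigma\rangle$-orbits at the self-paired offset were \emph{already} a single orbit, so the naive halving by $2$ overshoots and the correction is \emph{positive} (add back $\tfrac{1}{2}\gcd(p_i,p_j)$), not a further reduction. Since $\mathrm{lcm}(p_i,p_j)/p_i$ and $\mathrm{lcm}(p_i,p_j)/p_j$ are coprime and hence cannot both be even, the symmetrized correction is exactly $\tfrac{1}{4}\gcd(p_i,p_j)\bigl|(-1)^{\mathrm{lcm}/p_i}-(-1)^{\mathrm{lcm}/p_j}\bigr|$, as you claim. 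With that clarification in place, the derivation goes through.
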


\begin{remark}
	
	Note that the expression in the second summation in the formula for $\tau_3(P)$ has been corrected in accordance with the proof of \cite[Corollary 2.6]{Qian}, Case 1, which was also pointed out by A. Howroyd in a comment on \href{https://oeis.org/A000665}{A000665} in OEIS \cite{OEIS}.
\end{remark}

We may summarize the above considerations in the following table:

	\begin{table}[h]
	\begin{tabular}{ c|ccc }
	\hline
	$n$ & $d_n-2$ & $r_n-2$ & $h_3(n)-1$ \\
	    & (all complexes) & (isomorphism classes) & ($2$-pure complexes)\\
	\hline
	1 & 1 & 1  & 0 \\
	2 & 4 & 3 & 0 \\
	3 & 18  & 8 & 1 \\
	4 & 166 & 28 & 4 \\
	5 & 7579 & 208 & 33 \\
	6 & 7828352 & 16351 & 2135 \\
	7 & 2414682040996 & 490013146 & 7013319 \\
	8 & $\sim \!5.61 \times 10^{22}$ & $\sim \!1.39 \times 10^{18}$ & $\sim \!1.79 \times 10^{12}$ \\
	9 & $\sim \!2.86 \times 10^{41}$ & $\sim \!7.89 \times 10^{35}$ & $\sim \!5.33 \times 10^{19}$ \\
	\hline
\end{tabular}

\label{table:numbers_of_complexes}
\vskip 15pt
\caption{The numbers of different types  of geometric simplicial complexes  on $n$ vertices.}
\end{table}


Let us conclude this section by mentioning an interesting estimate of
the number of homotopy types of complexes on a given number of vertices.
A. Newman \cite{Newman_homotopy} showed that the number $ht(n)$
of homotopy types of simplicial complexes on $n$ vertices is asymptotically doubly exponential in $n$ in the sense that for sufficiently large $n$ we have the estimate
\[
2^{2^{0.02n}}  \leq ht(n)  \leq d_n \leq 2^{2^n}.
\]

\section{Complexes on at most $7$ vertices}
\label{sec:up_to_7_ver}

In this section we determine the groups that arise as fundamental groups of complexes on
at most $7$ vertices. While the results are as one might expect, it is still valuable to present a unified
treatment of all cases. We occasionally rely on the following criterion to recognize wedges of spheres:

\begin{theorem}[C. T. C. Wall \cite{Wall}, Proposition 3.3]\label{wall}
	Suppose $X$ is a finite connected $2$-dimensional CW complex whose fundamental group is free. Then $X$ is homotopy equivalent to a wedge of spheres.
\end{theorem}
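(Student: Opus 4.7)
My strategy would be to construct an explicit wedge of circles and $2$-spheres together with a map into $X$, and verify it is a homotopy equivalence by lifting to universal covers and invoking Whitehead's theorem. First I would normalize $X$ by collapsing a maximal tree in its $1$-skeleton, so that $X$ has a single $0$-cell and $X^{(1)}=\bigvee^n S^1$; the attaching words $w_1,\dots,w_k$ of the $2$-cells of $X$ then give a presentation $\pi_1(X)=F_n/\langle\!\langle w_1,\dots,w_k\rangle\!\rangle\cong F_r$. Next I would pick $r$ loops in $X$ realizing a free basis of $\pi_1(X)$ and combine them with $s$ spherical maps (to be specified) into a model $Y=\bigl(\bigvee^r S^1\bigr)\vee\bigl(\bigvee^s S^2\bigr)$ and a map $f:Y\to X$ inducing an isomorphism on $\pi_1$.

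The crucial step is selecting the spheres so that $f$ becomes a weak equivalence. The universal cover $\widetilde X$ is a simply connected $2$-dimensional CW complex, and its cellular chain complex
\[
0\longrightarrow \ZZ[F_r]^{k}\xrightarrow{\partial_2}\ZZ[F_r]^{n}\xrightarrow{\partial_1}\ZZ[F_r]\longrightarrow 0
\]
identifies $H_2(\widetilde X)=\pi_2(X)$ with $\ker\partial_2$, a submodule of a free $\ZZ[F_r]$-module. The key algebraic input I would invoke is Cohn's theorem that the group ring $\ZZ[F_r]$ is a free ideal ring, so that every submodule of a free $\ZZ[F_r]$-module is itself free. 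Consequently $H_2(\widetilde X)$ is a free $\ZZ[F_r]$-module of some rank $s$. Picking a $\ZZ[F_r]$-basis and representing each basis element by a map $S^2\to\widetilde X$, then projecting down to $X$, supplies the $s$ sphere maps used to build $f$.

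To finish, I would check that $f$ is a homotopy equivalence. Since $f_*$ is a $\pi_1$-iso, it lifts to $\widetilde f:\widetilde Y\to\widetilde X$ between simply connected $2$-complexes. The universal cover $\widetilde Y$ is the Cayley tree of $F_r$ with $s$ copies of $S^2$ wedged at every vertex, so $H_2(\widetilde Y)\cong \ZZ[F_r]^s$, and by construction $\widetilde f_*$ sends this basis to the chosen basis of $H_2(\widetilde X)$ and is thus an isomorphism. As both spaces are simply connected and $2$-dimensional, Whitehead's theorem yields that $\widetilde f$ is a homotopy equivalence, whence so is $f$. The principal obstacle — and the point where the freeness of $\pi_1(X)$ enters in an essential way — is establishing that $H_2(\widetilde X)$ is free over $\ZZ[F_r]$; without Cohn's theorem one could only conclude projectivity, which is insufficient for picking off geometric basis elements. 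In the degenerate case $r=0$ the whole argument collapses to the classical fact that a simply connected finite $2$-complex is homotopy equivalent to a wedge of $2$-spheres.
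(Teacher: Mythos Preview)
The paper does not prove this statement; it is quoted as Proposition~3.3 of Wall~\cite{Wall} and used as a black box, so there is no proof in the paper to compare against. Your outline is essentially the standard argument (and close to Wall's original) and is correct.

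One point worth tightening: invoking Cohn's theorem that $\ZZ[F_r]$ is a free ideal ring yields that $H_2(\widetilde X)=\ker\partial_2$ is \emph{free} over $\ZZ[F_r]$, but not directly that its rank $s$ is \emph{finite}, which you need in order to build a finite wedge $\bigvee^s S^2$. This is easily supplied: since $\widetilde X$ is simply connected, $H_1(\widetilde X)=0$, so $\partial_2$ surjects onto $\ker\partial_1$; the augmentation ideal of $\ZZ[F_r]$ is free of rank $r$, whence $\ker\partial_1$ is stably free and hence free of rank $n-r$. The short exact sequence
\[
0\longrightarrow H_2(\widetilde X)\longrightarrow \ZZ[F_r]^{k}\longrightarrow \ker\partial_1\longrightarrow 0
\]
therefore splits, exhibiting $H_2(\widetilde X)$ as a direct summand of a finitely generated free module, so it is free of finite rank $s=k-n+r$. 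With this in hand your Whitehead argument goes through as written.
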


Suppose $X$ and $Y$ are simplicial complexes, viewed as CW complexes. Then, their product $X\times_{\mathrm{CW}}Y$ is a CW complex whose cells are of the form $\sigma\times\tau$, where $\sigma$ and $\tau$ are cells (simplices) of $X$ and $Y$, respectively. (Complexes whose cells are products of simplices are called {\em prodsimplicial complexes}.)

Recall the notion of the {\em mapping cone} $C_f$ of a map $f\colon A\to X$
(see A. Hatcher \cite{Hatcher}, Chapter 0). When $i\colon A\to X$ is the inclusion of a subspace, we
also write this as $C_i=X\cup CA$ and say that we have {\em attached the cone $CA$
along $A$ in $X$}. Since the effect of this operation is to make the inclusion of $A$ null-homotopic
in $C_i$, we also say that we have {\em coned off $A$ in $X$}. Note that if $A$ is a subcomplex of
a simplicial complex $K$ on $n$ vertices, then $K \cup CA$ has a natural structure of a simplicial
complex on $n+1$ vertices.

\newcounter{case}[theorem]
\renewcommand{\thecase}{\thetheorem(\alph{case})}

\begin{proposition}\label{split-suspension_and_cones}
	Suppose $K$ is a simplicial complex with vertex set $\{1,\ldots,n\}$. Let $A$ be the subcomplex spanned by the vertices $\{1,\ldots,m\}$ and let $B$ be the subcomplex spanned by the vertices $\{m+1,\ldots,n\}$. In the following, both of these are viewed as CW complexes.
	\begin{itemize}
		\item[(a)] \leavevmode\refstepcounter{case}\label{ssc-split}%
		If $A$ and $B$ are contractible, then  $|K|$ is homotopy equivalent to a suspension
		\[
		|K|\simeq\Sigma Z,
		\]
		where $Z$ is a subcomplex of $A\times_{\mathrm{CW}}B$. The dimension of $Z$ is equal to
		\[
		\dim Z=\max\{\dim\sigma\mid\sigma\in K\;\text{has vertices both in $A$ and $B$}\}-1.
		\]
		\item[(b)]\leavevmode\refstepcounter{case}\label{ssc-cone}%
		If $B$ is contractible, then $|K|$ is homotopy equivalent to a mapping cone
		\[
		|K| \simeq C_f,
		\]
		where $f\colon Z\to A$ is a map defined on a certain subcomplex $Z\leq A\times_{\mathrm{CW}}B$. In~particular, if $A$ is connected, then for some normal subgroup $H \unlhd \pi_1(A)$ we get
		\[
		\pi_1(K) = (\pi_1(A)/H) \ast F_{r-1}.
		\]
	\end{itemize}	
\end{proposition}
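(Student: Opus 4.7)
The plan is to present $|K|$ as a double mapping cylinder built from $A$, $B$, and an auxiliary prodsimplicial complex $Z\subseteq A\times_{\mathrm{CW}}B$ recording the ``mixed'' simplices of $K$, and then to collapse the contractible end(s). Every simplex $\tau$ of $K$ that meets both vertex blocks can be written uniquely as a join $\sigma_A\ast\sigma_B$ of its nonempty intersections with $A$ and with $B$. Let $Z$ denote the subcomplex of $A\times_{\mathrm{CW}}B$ consisting of precisely those product cells $\sigma_A\times\sigma_B$ for which $\sigma_A\ast\sigma_B$ is a simplex of $K$, and let $\pi_A\colon Z\to A$, $\pi_B\colon Z\to B$ be the coordinate projections. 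Using the standard homeomorphism $\sigma_A\ast\sigma_B\cong(\sigma_A\times\sigma_B\times[0,1])/{\sim}$, under which $\sigma_A\times\sigma_B\times\{0\}$ projects onto $\sigma_A$ and $\sigma_A\times\sigma_B\times\{1\}$ onto $\sigma_B$, and after checking that these prismatic descriptions glue compatibly along common faces, one identifies $|K|$ with the double mapping cylinder
\[
M(\pi_A,\pi_B) \;=\; A\;\cup_{\pi_A}\;(Z\times[0,1])\;\cup_{\pi_B}\;B.
\]
For part (a), contractibility of both $A$ and $B$ implies that collapsing each to a point in $M(\pi_A,\pi_B)$ is a homotopy equivalence, producing $|K|\simeq\Sigma Z$; the dimension formula follows immediately from $\dim(\sigma_A\times\sigma_B)=\dim(\sigma_A\ast\sigma_B)-1$. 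For the homotopy statement in (b), contractibility of $B$ alone collapses the right end of the cylinder, leaving the mapping cone $C_{\pi_A}$, so $|K|\simeq C_f$ with $f:=\pi_A$.

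To compute $\pi_1(C_f)$ when $A$ is connected, decompose $Z=Z_1\sqcup\cdots\sqcup Z_r$ into its connected components and write $f_i:=f|_{Z_i}$. Build $C_f$ in two steps: first form the intermediate space $Y:=A\cup_{f_1}CZ_1\cup\cdots\cup_{f_r}CZ_r$ in which each $CZ_i$ carries its own separate cone point $c_i$; then identify $c_1,\ldots,c_r$ to obtain $C_f$. Each individual cone attachment is a standard Van Kampen step that quotients the fundamental group by the normal closure of $f_{i\ast}(\pi_1(Z_i))$, so after all $r$ attachments $\pi_1(Y)=\pi_1(A)/H$, where $H$ is the normal closure in $\pi_1(A)$ of $\bigcup_{i=1}^r f_{i\ast}\pi_1(Z_i)$. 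Since $Y$ is connected, identifying its $r$ cone points has the homotopy type of wedging on $r-1$ circles, so free-multiplies $\pi_1$ by $F_{r-1}$, giving $\pi_1(K)\cong(\pi_1(A)/H)\ast F_{r-1}$.

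The main obstacle is the compatibility verification in the first paragraph: one must check that the prismatic decompositions of different mixed simplices agree on their common faces, so that the local join-to-cylinder homeomorphisms really assemble into a global homeomorphism $|K|\cong M(\pi_A,\pi_B)$. Once this geometric core is in place, both parts of the proposition reduce to routine manipulations of mapping cylinders together with a standard Van Kampen argument; the interpretation of $r$ in $F_{r-1}$ as the number of connected components of $Z$ falls out of the argument above.
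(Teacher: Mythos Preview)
Your argument is correct and follows essentially the same route as the paper: both identify $|K|$ with a double mapping cylinder over a prodsimplicial $Z\subseteq A\times_{\mathrm{CW}}B$, collapse the contractible end(s), and handle $\pi_1$ by separating the cone points over the components of $Z$ and then re-identifying them to produce the $F_{r-1}$ factor. The only notable difference is that the paper avoids your ``compatibility verification'' entirely by defining $Z$ intrinsically as the level set $|K|\cap q^{-1}\{\tfrac12\}$ of the natural projection $q\colon A\ast B\to I$, so that $Z$, $U=|K|\cap q^{-1}[0,\tfrac12]$, and $V=|K|\cap q^{-1}[\tfrac12,1]$ are manifestly well-defined subspaces with $U$ deformation retracting to $A$ and $V$ to $B$; this sidesteps any face-by-face gluing check.
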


\begin{proof}
	Recall that $|K|$ can be understood as a subspace of the join $A\ast B=(A\times B\times I)/_\sim$, where $A\times B\times\{0\}$ is collapsed to $A$ and $A\times B\times\{1\}$ is collapsed to $B$ (see Hatcher \cite{Hatcher}, Chapter 0). We have a well-defined projection $q\colon A\ast B\to I$ given by $q([a,b,t])=t$. Let $U=|K|\cap q^{-1}[0,\frac12]$ and $V=|K|\cap q^{-1}[\frac12,1]$. Write $Z:=U\cap V=|K|\cap q^{-1}\{\frac12\}$. Note that $U$ deformation retracts to $q^{-1}\{0\}=A$ and $V$ deformation retracts to $q^{-1}\{1\}=B$.
	
	If $A$ and $B$ are both contractible, we have $|K|\simeq\Sigma Z$ and $Z$ is a subspace of $A\times B\times\{\frac12\}$. To determine the dimension of $Z$, simply note that $Z$ intersects each simplex that has vertices in both $A$ and $B$ along a subset of codimension~$1$.
	
	For the second statement, since $B$ is contractible, $V\simeq V/q^{-1}\{1\}\approx CZ$. On the other hand, $A\simeq M_f$, the mapping cylinder of the map $f\colon Z\to A$ defined as $f([a,b,\frac12])=a$. This decomposition shows that $|K|/B\simeq C_f$, as required.
	
	If $Z$ has $r$ connected components, write it as $Z=Z_1\cup\ldots\cup Z_r$. We can then further decompose the mapping cone as follows:
	\[
	C_f=M_f\cup CZ\simeq (M_f\cup CZ_1\cup\ldots\cup CZ_r)/_\sim\simeq (M_f\cup CZ_1\cup\ldots\cup CZ_r)\vee\bigvee^{r-1} S^1,
	\]
	where each cone $CZ_i$ has a separate apex, but then we identify them using $\sim$. Note that the second homotopy equivalence uses connectedness of $A$. Therefore $C_f$ is built from $A\simeq M_f$ by attaching $r$ cones, which has the effect of killing a subgroup of $\pi_1(A)$, and wedging with $r-1$ circles, which adds the free factor.
\end{proof}	

\begin{remark}\label{removing-a-ball}
	In the setting of Proposition \ref{ssc-cone}, if $K$ is a closed combinatorial $d$-manifold (i.e. the vertex links are combinatorial spheres), and $B$ is a combinatorial $d$-ball, we know that $K\setminus\operatorname{int}B$ has a PL collar $C$ (see \cite[Corollary 1.23]{Hudson}). Assuming $d\geq 3$, $C\simeq S^{d-1}$ is simply connected, so Seifert--van Kampen leads to a stronger conclusion:
	\[
	\pi_1(K)\cong\pi_1(K\setminus B)\ast_{\pi_1(C)}\pi_1(B\cup C)\cong\pi_1(K\setminus B)\cong\pi_1(A).
	\]
	Note that all triangulations are combinatorial if $d\leq 4$ (see \cite[Proposition 3.11]{Friedl-etAl}).
\end{remark}

\subsection{Simplicial complexes on at most $5$ vertices}

Simplicial complexes on at most $5$ vertices are easy to analyze directly so we only state the results.

It is sufficient to consider connected complexes. The first non-contractible case is the circle that appears as
the boundary of the $2$-simplex spanned by $3$ vertices. On $4$ vertices
one has a $2$-dimensional sphere or a wedge of at most $3$ circles.
On $5$ vertices there is a $3$-dimensional sphere and wedges of up to four $2$-dimensional spheres and up to six circles for the complete graph on $5$ vertices. In all the cases
the corresponding fundamental group is free of rank less than or equal to $6$. This can be summarized as follows:

\begin{proposition}\label{5vertex}
	Let $K$ be a connected simplicial complex on $n\leq5$ vertices. Then $K$ is homotopy equivalent to a wedge of spheres.
\end{proposition}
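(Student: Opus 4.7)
The plan is to proceed by direct case analysis, organized by the number of vertices $n$, using Proposition \ref{split-suspension_and_cones} to peel off cones or suspensions and Wall's theorem \ref{wall} to handle the 2-dimensional cases once $\pi_1(K)$ is known to be free. The cases $n \leq 2$ are trivial ($K$ is a point or an edge, hence contractible), and for $n = 3$ the only connected complexes are the 1-dimensional subgraphs of $K_3$ (paths, which are contractible, or the cycle $\partial\Delta^2 \simeq S^1$) together with the filled triangle $\Delta^2$ (contractible).

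For $n = 4$: if $\dim K = 3$, then $K = \Delta^3$ is contractible. If $\dim K \leq 2$, a direct enumeration of the finitely many isomorphism classes---or, alternatively, an application of Proposition \ref{ssc-split} to a vertex whose induced subcomplex on the other three vertices is contractible---shows that each such $K$ is a wedge of copies of $S^1$ and $S^2$. In particular, the non-contractible 2-dimensional cases arising are essentially $\partial\Delta^3 \simeq S^2$, possibly wedged with circles coming from edges not attached to any triangle.

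For $n = 5$ we argue by dimension of $K$. If $\dim K = 4$, $K = \Delta^4$ is contractible. If $\dim K = 3$, pick any 3-simplex $\sigma \subset K$ and let $v$ be the remaining vertex. The induced subcomplex on the four vertices of $\sigma$ contains all faces of $\sigma$, so it equals $\Delta^3$, which is contractible. Applying Proposition \ref{ssc-split} yields $|K| \simeq \Sigma Z$ for a subcomplex $Z$ of $\Delta^3$ of dimension at most $\dim K - 1 \leq 2$; by the $n = 4$ case $Z$ is homotopy equivalent to a wedge of spheres, and so is $\Sigma Z$ (an $S^0$ suspends to $S^1$, and each extra component contributes an extra $S^1$). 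If $\dim K \leq 1$, $K$ is a graph, a wedge of circles. If $\dim K = 2$, either some vertex $v$ has a contractible induced subcomplex on $V \setminus \{v\}$---in which case Proposition \ref{ssc-split} reduces us to the $n = 4$ case---or one verifies directly from the presentation of $\pi_1(K)$ (generators from 1-skeleton edges outside a spanning tree, one relator per triangle) that $\pi_1(K)$ is free, and then Wall's theorem \ref{wall} applies.

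The main obstacle is this final sub-case on 5 vertices with $\dim K = 2$ where no vertex has a contractible 4-vertex restriction: one must verify that $\pi_1(K)$ is always free, even when the complex is ``triangle-rich'' (for example when $K$ contains most of the 2-skeleton of $\Delta^4$). Because there are only $33$ isomorphism classes of 2-pure complexes on 5 vertices (see the table in Section \ref{sec:preliminary}), a finite---and easily automated---case check, e.g.\ via Nielsen reduction of the triangle-presentation, resolves this and completes the proof.
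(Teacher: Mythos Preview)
Your proof is correct and follows the same direct case-by-case approach as the paper, which in fact offers no detailed argument here and simply asserts that complexes on at most five vertices are ``easy to analyze directly''. Your version is more structured, making systematic use of Proposition~\ref{split-suspension_and_cones} and Theorem~\ref{wall}, and correctly defers the residual $n=5$, $\dim K=2$ sub-case to the same kind of finite enumeration the paper implicitly relies on.
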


%
%
%

\subsection{Simplicial complexes on at most $6$ vertices}

On $6$ vertices there are $7828352$ simplicial complexes and $16351$ isomorphism classes so the analysis becomes a bit more involved.
Fundamental groups of such complexes can be free of rank at most $10$,
realized by subcomplexes of the $1$-skeleton of the $5$-simplex.
There is also a well-known minimal triangulation of the projective
plane, whose fundamental group is $\ZZ_2$. To see that these exhaust all possibilities, we may argue as follows.

	\begin{proposition}\label{prism}
	Let $X_1=\Delta^3$ and $X_2=\Delta^1$, viewed as CW complexes with cells given by simplices. Suppose $X\leq X_1\times_{\mathrm{CW}}X_2$ is a connected subcomplex and $\dim X\leq 2$. Then $X$ is homotopy equivalent to a wedge of spheres.
\end{proposition}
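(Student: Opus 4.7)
The plan is to invoke Wall's theorem \ref{wall}: since $\dim X\leq 2$, it suffices to show that $\pi_1(X)$ is free, from which the wedge-of-spheres conclusion follows immediately. To compute $\pi_1(X)$ I would analyse $X$ via the projection onto the second factor $\Delta^1$. Write $X_a = X\cap(\Delta^3\times\{a\})$ and $X_b = X\cap(\Delta^3\times\{b\})$ for the top and bottom slices of $X$, and let $X_m\leq \Delta^3$ denote the subcomplex consisting of those simplices $\sigma$ for which the prism cell $\sigma\times\Delta^1$ belongs to $X$. Then $X_m$ is naturally a subcomplex of both $X_a$ and $X_b$, and $X$ is homotopy equivalent to the double mapping cylinder of the subcomplex inclusions $X_a\hookleftarrow X_m\hookrightarrow X_b$. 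Each of $X_a,X_b,X_m$ is a subcomplex of $\Delta^3$ on at most four vertices, and hence by Proposition \ref{5vertex} is a wedge of spheres with free fundamental group.

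When $X_a$ and $X_b$ are both contractible the double mapping cylinder collapses to $\Sigma X_m$, which is itself a wedge of spheres. In general, applying Seifert--van Kampen to the cover obtained by thickening $X_a$ and $X_b$ inside $X$ gives
\[
\pi_1(X)\;\cong\;\pi_1(X_a)\ast_{\pi_1(X_m)}\pi_1(X_b),
\]
with extra free factors as needed to account for disconnectedness of the three pieces.

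The hard part will be to verify that this amalgamated free product of free groups is itself free, since amalgamations of free groups can in general produce non-free groups such as the torus-knot groups $\langle a,b\mid a^p = b^q\rangle$. To rule these out I would exploit the severe combinatorial restrictions on subcomplexes of $\Delta^3$: the only possible homotopy types are a point, $S^2$, or a wedge of at most three circles, and a subcomplex inclusion between two $S^1$-type subcomplexes of $\Delta^3$ is literally an inclusion of cycle graphs, so it can never wrap with degree greater than one. By choosing a spanning tree of the $1$-skeleton $K_4$ of $\Delta^3$ that restricts to spanning trees of all three subcomplexes simultaneously, each inclusion-induced map $\pi_1(X_m)\to \pi_1(X_a)$ and $\pi_1(X_m)\to \pi_1(X_b)$ factors as a quotient by some free factor followed by a free-factor inclusion. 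Since both maps then identify (or kill) generators in matched pairs, the amalgamation reduces to a free product of free groups, and a short finite case check completes the argument.
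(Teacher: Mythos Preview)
Your strategy is genuinely different from the paper's and is in principle workable, but two steps are only gestured at and, as written, constitute real gaps.

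First, the disconnected case. When $X_m$ is disconnected you do not merely pick up ``extra free factors'': the double mapping cylinder is then a graph of spaces whose underlying graph can have nontrivial first Betti number, so the Seifert--van Kampen output is an iterated HNN extension, not an amalgamated free product with some $F_r$ tacked on. Showing that such an HNN extension of free groups along your edge maps is again free is exactly the same kind of problem as the amalgam case and needs the same care; you cannot sweep it under ``extra free factors''.

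Second, and more seriously, the heart of your argument is the claim that one can choose a spanning tree of $K_4$ restricting to spanning trees of $X_m$, $X_a^{(1)}$, $X_b^{(1)}$ simultaneously, so that both maps $\pi_1(X_m)\to\pi_1(X_a)$ and $\pi_1(X_m)\to\pi_1(X_b)$ send each basis element either to $1$ or to a basis element. A simultaneous spanning tree need not exist once the three subcomplexes have different vertex sets, and even with compatible spanning trees the quotient maps $\pi_1(X_a^{(1)})\to\pi_1(X_a)$ can send a generator of $\pi_1(X_m)$ to a \emph{word} in the surviving generators rather than to a single one (e.g.\ with tree $\{12,13,14\}$ and the triangle $234$ filled, the generator $[34]$ maps to $[23]^{-1}[24]$). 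You then need a \emph{single} basis of $\pi_1(X_m)$ that diagonalises both $\phi_a$ and $\phi_b$ at once; this is what actually makes the pushout free, and it is not established by your spanning-tree remark. It can be checked case by case (there are few subcomplexes of $\Delta^3$), but you have not done the check, and it is the whole content of the proof.

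The paper sidesteps all of this. If $X$ contains a square $ij\times\Delta^1$, the paper collapses that square to an edge (a homotopy equivalence identifying $i\times 1\sim i\times 2$ and $j\times 1\sim j\times 2$), then subdivides any remaining square with a single diagonal. The result is an honest simplicial complex on $6$ vertices in which at least one of the two diagonals of the square $kl\times\Delta^1$ is absent, so not all $\binom{6}{2}$ edges are present. One then invokes Case~3 of Theorem~\ref{6vertex} (whose proof does not use Proposition~\ref{prism}) to conclude that the complex is a wedge of spheres, since the unique non-wedge-of-spheres $6$-vertex complex is the $\RR P^2$ triangulation, which has a complete $1$-skeleton. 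This avoids any group-theoretic case analysis.
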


\begin{figure}[h]
	\includegraphics[width=180pt]{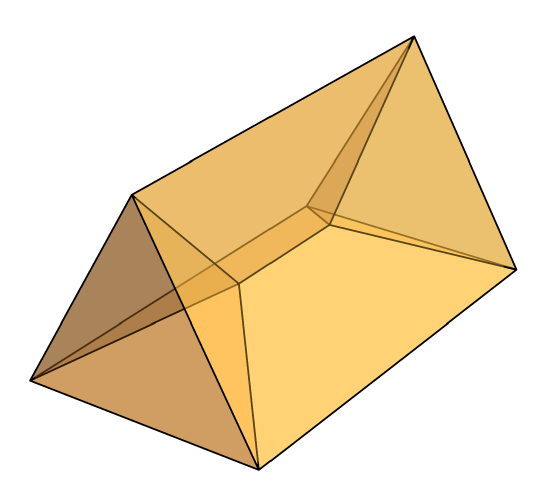}
	\caption{The complex $Y=(X_1\times_{\mathrm{CW}}X_2)^{(2)}$ can be pictured as a triangular prism in $\RR^3$, subdivided into five rooms. Two of these rooms are shaped like tetrahedra, while the other three are shaped like triangular prisms.}
	\label{figure:prism}
\end{figure}

\begin{proof}
	To prove this, we are going to use Case 3 of Theorem \ref{6vertex}. This is not circular, because only Case 2 of Theorem \ref{6vertex} depends on Proposition \ref{prism}. Note that $Y=(X_1\times_{\mathrm{CW}}X_2)^{(2)}$ has $8$ vertices, $16$ edges and $14$ two-cells (see Figure \ref{figure:prism}). Writing the vertices of $X_1$ as $\{1,2,3,4\}$ and the vertices of $X_2$ as $\{1,2\}$, the complex $X$ can therefore have two different types of two-cells:
	\begin{itemize}
		\item triangles $ijk\times l$, where $1\leq i<j<k\leq 4$ and $l\in\{1,2\}$ and
		\item squares $ij\times 12$, where $1\leq i<j\leq 4$.
	\end{itemize}
	If $X$ has no squares, it consists of two $4$-vertex simplicial complexes, connected by a collection of edges, so it is a wedge of spheres.
	
	Suppose $X$ has a square $ij\times12$. We collapse this square to an edge we call $w_iw_j$ by projecting along the edge $12$. In other words, we remove the square $ij\times12$, we collapse the edge $i\times 12$ to a new vertex $w_i$, we collapse the edge $j\times 12$ to a new vertex $w_j$ and we glue the edges $ij\times 1$ and $ij\times 2$ together to obtain a new edge $w_iw_j$. Since this procedure contracts a square to a segment, and both of these are contractible, the obtained space has the same homotopy type as $X$.
	
	After subdividing the remaining squares by adding a single diagonal edge, this yields a $2$-dimensional simplicial complex on $6$ vertices, which does not contain all possible edges and is therefore homotopy equivalent to a wedge of spheres by Case 3 of Theorem \ref{6vertex}.
\end{proof}

	\begin{theorem}\label{6vertex}
	Let $K$ be a connected simplicial complex on $n\leq6$ vertices. Then $K$ is either the unique minimal triangulation of $\RR P^2$ or is homotopy equivalent to a wedge of spheres.
\end{theorem}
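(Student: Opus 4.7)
The $n \le 5$ case is immediate from Proposition \ref{5vertex}, so I focus on $n = 6$. My plan is a case analysis based on $\dim K$ and on whether the $1$-skeleton of $K$ is complete, using Proposition \ref{split-suspension_and_cones} repeatedly to realise $|K|$ as a suspension or a mapping cone of a smaller piece.

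I would first dispose of the two easy extremes. If $\dim K \le 1$ then $K$ is a connected graph on at most six vertices, hence a wedge of circles. If $\dim K \ge 3$ then $K$ contains a $3$-simplex $\sigma$; taking $A$ to be the four vertices of $\sigma$ makes $K_A = \Delta^3$ contractible. Provided the remaining two vertices $B$ span an edge of $K$, Proposition \ref{ssc-split} yields $|K| \simeq \Sigma Z$ with $Z \le \Delta^3 \times_{\mathrm{CW}} \Delta^1$ of dimension $\dim K - 1$. For $\dim K = 3$ this means $\dim Z = 2$, and Proposition \ref{prism} forces $Z$ to be a wedge of spheres, whence so is $\Sigma Z$. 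The residual sub-configurations (edge of $B$ missing, or $\dim K \in \{4, 5\}$) involve only a handful of possibilities on six vertices and I would simply tabulate them.

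This leaves $\dim K = 2$. If the $1$-skeleton is not $K_6$, I would pick a missing edge $\{u,v\}$ and apply Proposition \ref{ssc-cone} with $B = \{u\}$: then $|K| \simeq C_f$ for some $f \colon Z \to A$, where $A$ is the induced subcomplex on the other five vertices (a wedge of spheres by Proposition \ref{5vertex}) and $Z = \lk_K(u)$ is a graph on at most four vertices, since $v \notin \lk_K(u)$. A breakdown according to the homotopy type of $\lk_K(u)$ should show that $\pi_1(|K|)$ is free, whence Wall's Theorem \ref{wall} gives $|K| \simeq \bigvee S^{k_i}$. If instead the $1$-skeleton is $K_6$, I would examine the vertex links, each of which is a subgraph of $K_5$. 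When every vertex link is a $5$-cycle, $K$ is a combinatorial closed surface on six vertices, and the classical enumeration pins $K$ down as the unique minimal $\RR P^2$ triangulation. When some link is not a $5$-cycle, I would locate either a free face (permitting an elementary collapse) or a vertex-set split for which Proposition \ref{split-suspension_and_cones} applies, and iterate until a wedge of spheres is reached.

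The main obstacle is this very last subcase: proving that a $2$-dimensional complex with complete $1$-skeleton on six vertices whose fundamental group is not free must in fact be the $\RR P^2$ triangulation. This is the only place where Proposition \ref{split-suspension_and_cones} and Wall's theorem cannot be invoked as black boxes, and it calls for genuine combinatorial input---constraints from Euler characteristic, from the allowed shapes of vertex links, and from the fact that only one $6$-vertex closed surface triangulation has nontrivial fundamental group. The missing-edge sub-case is simpler but still needs some bookkeeping to verify freeness of $\pi_1(|K|)$.
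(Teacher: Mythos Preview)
For $\dim K\ge 3$ your outline agrees with the paper's: the same Proposition~\ref{ssc-split} + Proposition~\ref{prism} combination does the work, and the residual sub-cases you defer to tabulation are handled more directly there (for $\dim K\ge 4$ one has $K=\Delta^4\cup CA$ with $A\le\Delta^4$, whence $K\simeq\Sigma A$ and $A$ is a wedge of spheres by Proposition~\ref{5vertex}; for $\dim K=3$ with the edge $56$ absent, the cones from $5$ and $6$ are disjoint, so collapsing $\Delta^3$ gives $K\simeq\Sigma\lk_K(5)\vee\Sigma\lk_K(6)$).

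The substantive divergence is at $\dim K=2$. Your dichotomy (complete $1$-skeleton versus not) leaves a real gap in the ``missing edge'' branch that is more than bookkeeping. From Proposition~\ref{ssc-cone} you only get $\pi_1(K)\cong(\pi_1(A)/H)\ast F_{r-1}$ with $\pi_1(A)$ free, and a quotient of a free group by the normal closure of finitely many elements is \emph{not} free in general: you would have to verify that every embedded $3$- or $4$-cycle in $\lk_K(u)$ maps to a primitive or trivial element of $\pi_1(A)$, uniformly over all $5$-vertex complexes $A$. (Already for $A$ the $5$-vertex M\"obius band, where $\pi_1(A)=\ZZ$, one must rule out any short cycle on four vertices representing $\pm 2$; it happens to work, but not for a reason visible from the homotopy type of $\lk_K(u)$ alone.) Your ``complete $1$-skeleton, some link not a $5$-cycle'' branch is similarly underspecified---reducing via a free or maximal edge just throws you back into the missing-edge case. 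The paper sidesteps all of this with a sharper split: if the six vertices admit a partition into two triples each spanning a $2$-simplex of $K$ (``opposing triangles''), both halves are contractible and Proposition~\ref{ssc-split} gives $|K|\simeq\Sigma Z$, so $\pi_1(K)=1$. In the complementary case one has a triangle $123$ with $456\notin K$, and now \emph{every} triangle present forbids its opposite; combined with the no-free-edge assumption this is rigid enough that a short direct argument forces exactly the ten triangles of the $\RR P^2$ triangulation. The opposing-triangles criterion is the missing idea in your plan.
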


\begin{proof}
	We argue by cases, depending on the dimension of $K$.
	
	{\bf Case 1:} $\dim K\geq 4$. In this case, $K$ is obtained by taking a $4$-simplex and coning off a subcomplex $A$. Therefore, $K\simeq\Sigma A$. Since $A$ is a disjoint union of wedges of spheres by Proposition \ref{5vertex}, it follows that $K$ is a wedge of spheres as well.
	
	{\bf Case 2:} $\dim K=3$. In this case $K$ contains a $3$-simplex, which we may label $1234$. Without loss of generality, we may assume that the vertices $5$ and $6$ are connected. Hence, we are in the situation of Proposition \ref{ssc-split}, where $A$ is spanned by the vertices $\{1,2,3,4\}$ and $B$ by $\{5,6\}$, so $|K|\simeq\Sigma Y$ for some $Y\subseteq A\times_{\mathrm{CW}}B$ with $\dim Y\leq 2$. By Proposition \ref{prism}, the complex $Y$ is homotopy equivalent to a disjoint union of wedges of spheres, so $|K|\simeq\Sigma Y$ is a wedge of spheres as well.
	
	{\bf Case 3:} $\dim K\leq 2$. By Theorem \ref{wall}, it is sufficient to verify that either
the fundamental group is free or we have the triangulation of $\RR P^2$. If the vertices
$V=\{1,2,3,4,5,6\}$ can be split into two subsets that both span triangles, $|K|$ is a suspension
by Proposition \ref{ssc-split}, so its fundamental group is free. Therefore we can assume $K$ does
not contain a pair of opposing triangles. Without loss of generality, there is at least one triangle,
which we label $123$.
	
	Furthermore, we can assume there are no free edges (such edges can always be collapsed). The vertices $\{4,5,6\}$ do not span a triangle. Furthermore, $45$, $46$ and $56$ are edges that are incident to at least two triangles. Suppose not: then we can remove an edge that is not incident to at least two triangles (since it is either free or maximal) and ensure that the remaining two edges are present (by adding them if necessary). This lands us in the scenario of Proposition \ref{ssc-split}, so the complex is a suspension and therefore its fundamental group is free.
	
	Since the complex has no free edges, there must be triangles of the form $12x$, $13y$, $23z$, $a45$, $b46$, $c56$, $d45$, $e46$, $f56$, where $x,y,z\in\{4,5,6\}$ and $a,b,c,d,e,f\in\{1,2,3\}$. By permuting the vertices $\{4,5,6\}$ if necessary, we can assume without loss of generality that $x = 4$. But then the triangle $356$ (which is opposite to $124$) cannot be present, so we must have $c=1$ and $f=2$. This gives two further non-triangles $234$ and $134$.
	
	Now note that since the edge $24$ cannot be free, at least one of the triangles $245$ or $246$ must be present. Since the vertices $5$ and $6$ are still perfectly symmetric, we are free to transpose them and thereby assume without loss of generality that it is the former. Therefore, we have $a=2$ and $136$ is a non-triangle. Therefore $135$ is a triangle and $y=5$, so $246$ is a non-triangle. This leads to $b=1$ and $e=3$ and non-triangles $235$ and $125$. Hence, we must have $z=6$ and a non-triangle $145$. This finally implies that $d=3$ and $126$ is a non-triangle. We obtain the pure $2$-dimensional simplicial complex
	\[
	123,124,156,256,245,135,146,346,236,345,
	\]
	which is exactly the minimal triangulation of $\RR P^2$, see Figure~\ref{rp2}.
\end{proof}

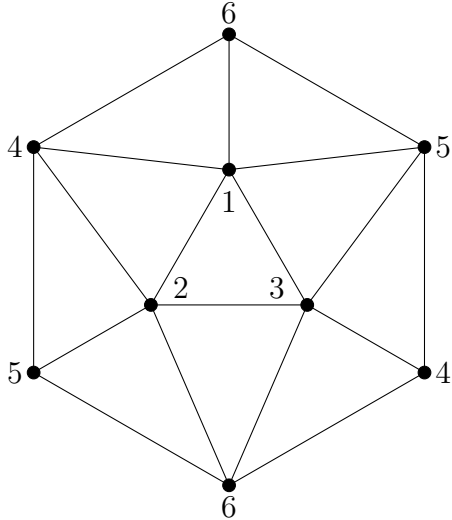
\begin{figure}[h]
	\centering
	
	\begin{tikzpicture}[scale=3]

		\foreach \i [evaluate=\i as \angle using -30+60*(\i-1)] in {1,...,6} {
			\coordinate (A\i) at ({cos(\angle)}, {sin(\angle)});
		}

		\foreach \i [evaluate=\i as \angle using 90+120*(\i-1)] in {1,2,3} {
			\coordinate (T\i) at ({0.4*cos(\angle)}, {0.4*sin(\angle)});
		}

		\draw (T1) -- (T2) -- (T3) -- cycle;
		\draw (A1) -- (A2) -- (A3) -- (A4) -- (A5) -- (A6) --cycle;
		
		\draw (A1) -- (T3) -- (A2) -- (T1) -- (A4) -- (T2) -- (A6) -- (T3);
		
		\draw (T1) -- (A3);
		\draw (T2) -- (A5);

		\filldraw (T1) circle (0.8pt) node[below, inner sep=8pt] {$1$};
		\filldraw (T2) circle (0.8pt);
		\draw (T2)+(0.05,-0.02) node[above right, inner sep=4pt] {$2$};
		\filldraw (T3) circle (0.8pt);
		\draw (T3)+(-0.05,-0.02) node[above left, inner sep=4pt] {$3$};
		
		\filldraw (A1) circle (0.8pt) node[right] {$4$};
		\filldraw (A2) circle (0.8pt) node[right] {$5$};
		\filldraw (A3) circle (0.8pt) node[above] {$6$};
		\filldraw (A4) circle (0.8pt) node[left] {$4$};
		\filldraw (A5) circle (0.8pt) node[left] {$5$};
		\filldraw (A6) circle (0.8pt) node[below] {$6$};

	\end{tikzpicture}

	\caption{The minimal triangulation of a real projective plane $\RR P^2$.}\label{rp2}
\end{figure}

\begin{corollary}\label{corollary:n=6}
Suppose $K$ is a connected simplicial complex on at most $6$ vertices. Then:
\begin{itemize}
\item $\pi_1(K)\cong F_n$, the free group on $n\in\{0,1,\ldots,10\}$ generators, or
\item $\pi_1(K)\cong\ZZ_2$ and $K$ is the minimal triangulation of $\RR P^2$.
\end{itemize}
\end{corollary}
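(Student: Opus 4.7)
The plan is to apply Theorem \ref{6vertex} directly and then bound the rank of the free part by a cyclomatic count on the $1$-skeleton.

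By Theorem \ref{6vertex}, the complex $K$ is either the unique minimal triangulation of $\RR P^2$, in which case $\pi_1(K)\cong\ZZ_2$, or it is homotopy equivalent to a wedge of spheres, in which case $\pi_1(K)$ is free of some rank $n$. So what remains is to establish that $0\leq n\leq 10$ and that every value in this range is realized.

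For the upper bound, I would observe that $K^{(1)}$ is a connected graph on at most $6$ vertices, hence has at most $\binom{6}{2}=15$ edges, and a spanning tree uses at least $6-1=5$ edges. Its cyclomatic number is therefore at most $10$, so $\pi_1(K^{(1)})\cong F_m$ with $m\leq 10$. The inclusion $K^{(1)}\hookrightarrow K$ induces a surjection $\pi_1(K^{(1)})\twoheadrightarrow\pi_1(K)$, and upon abelianizing, a surjection $\ZZ^m\twoheadrightarrow\ZZ^n$. This forces $n\leq m\leq 10$. For realizability, any $n\in\{0,1,\ldots,10\}$ is obtained as $\pi_1$ of a suitable connected subgraph of the $1$-skeleton of $\Delta^5$: take a spanning tree on $1\leq v\leq 6$ vertices and add the required number of extra edges (noting that on $6$ vertices one can add up to $10$ such edges).

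The bulk of the work has already been carried out in Theorem \ref{6vertex}, which handles the genuinely geometric content (ruling out all other complexes on at most $6$ vertices besides $\RR P^2$ and wedges of spheres). The present corollary is essentially a bookkeeping step, so no substantial obstacle is anticipated.
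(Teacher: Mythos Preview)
Your proposal is correct and is the natural way to fill in the details; the paper itself states the corollary without proof, treating it as an immediate consequence of Theorem \ref{6vertex}. One small imprecision: you write that ``a spanning tree uses at least $6-1=5$ edges'', but if $K$ has $v<6$ vertices the spanning tree has $v-1<5$ edges. This does not affect the conclusion, since for $v\le 6$ the cyclomatic number is at most $\binom{v}{2}-(v-1)=\tfrac{(v-1)(v-2)}{2}\le 10$, with equality only at $v=6$; you may want to phrase it that way.
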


\subsection{Simplicial complex on at most $7$ vertices}\label{subsec:scx up to 7}

By Table \ref{table:numbers_of_complexes} there are $\sim\!4.9\times 10^8$
simplicial complexes on $7$ vertices, up to isomorphism. As we are only interested in their fundamental groups, we may restrict
our attention to connected complexes. Moreover, we have the following easy observations.

\begin{lemma}\label{lemma:reduce_maximal_edge}
	Let $K$ be a connected simplicial complex on $n$ vertices with a maximal edge $(a,b)$. Then either $K' = K \setminus \{(a,b)\}$ is connected and $\pi_1(K) =\pi_1(K') * \ZZ$ or $\pi_1(K) = \pi_1(K' /  a \sim b)$ is the fundamental group of a complex on $n-1$ vertices which is the wedge of two complexes.
\end{lemma}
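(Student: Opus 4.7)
Since $(a,b)$ is a maximal edge of $K$, no simplex of $K$ strictly contains it, so removing it from the face list yields a genuine simplicial complex $K'$ with the same vertex set, and topologically $|K|$ is obtained from $|K'|$ by attaching a single $1$-cell along the two endpoints $a,b$. The argument splits on whether this attachment is performed inside one connected component or between two.

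\textbf{Case 1: $K'$ is connected.} Choose any path $\gamma$ in $|K'|$ from $a$ to $b$. Since $|K'|$ is a CW complex, it is locally contractible, so one can deformation retract a neighborhood of $\gamma$ onto a point, which shows that attaching the edge $(a,b)$ along its two endpoints has the same effect (up to homotopy) as attaching a $1$-cell along the constant map at a single point. In other words, $|K|\simeq |K'|\vee S^1$, and by Seifert--van Kampen (or the classical computation for a wedge) one gets $\pi_1(K)\cong \pi_1(K')\ast\ZZ$.

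\textbf{Case 2: $K'$ is disconnected.} Since removing a single edge from a connected graph produces at most two connected components, and $K$ is connected precisely because of $(a,b)$, the complex $K'$ has exactly two components $K_a\ni a$ and $K_b\ni b$. The edge $(a,b)$ is contractible, so collapsing it is a homotopy equivalence of $|K|$ onto the quotient $|K'|/(a\sim b)$, which is combinatorially the simplicial wedge $K_a\vee K_b$ with the identified vertex serving as the basepoint. This is a genuine simplicial complex on $n-1$ vertices precisely because the only simplex of $K$ containing both $a$ and $b$ was the edge itself (by maximality), so no further identifications are forced among the other simplices. Hence $\pi_1(K)\cong \pi_1(K_a\vee K_b)=\pi_1(K'/a\sim b)$.

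\textbf{Main obstacle.} There is no serious technical difficulty here; the only thing one must be careful about is verifying that $K'$ and $K'/(a\sim b)$ really are simplicial complexes, and that point is what the hypothesis of maximality of $(a,b)$ is designed to guarantee. The rest is a direct application of standard facts about wedges and CW attachments.
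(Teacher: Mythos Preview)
Your proof is correct and follows essentially the same approach as the paper's, which is in fact much terser: the paper only writes out Case 2 (the disconnected case) in a single sentence and leaves Case 1 as implicit. Your added detail on why $|K|\simeq |K'|\vee S^1$ in the connected case and why $K'/(a\sim b)$ remains a genuine simplicial complex is accurate and fills in what the paper takes for granted.
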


\begin{proof}
	In the latter case $K'$ has two connected components and we can glue these two parts identifying the vertices $a$ and $b$ forming a connected complex $K' / a \sim b$ on $n-1$ vertices of the same homotopy type as $K$, which is the wedge of the two components of $K'$.
\end{proof}	

\begin{lemma}\label{lemma:2-pure}
If $G$ is the fundamental group of a connected simplicial complex $K$ on $n \geq 3$ vertices, then
there exists a connected $2$-pure complex $K'$ on $n$ vertices such that $G \cong \pi_1(K') * F_r$ for
some $r\geq 0$.
\end{lemma}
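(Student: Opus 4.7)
The plan is to reduce $K$ to a 2-pure complex on the same vertex set by a sequence of elementary operations that alter $\pi_1$ only by free factors. I proceed by a double induction: outer on $n \geq 3$, inner on the number $e$ of maximal edges of the complex.

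First I would replace $K$ by its 2-skeleton $K^{(2)}$, which has the same vertex set and the same fundamental group as $K$. In this setting the only obstruction to 2-purity is the presence of maximal edges, since $K$ is connected and hence every vertex lies in at least one edge, and every endpoint of an edge that belongs to a triangle automatically lies in that triangle. For the base case $n = 3$ I would inspect the finitely many connected complexes on three vertices directly: each has $\pi_1 \in \{1, \ZZ\}$, and the filled triangle $\Delta^2$ is 2-pure on $3$ vertices with trivial $\pi_1$, so every such group equals $\pi_1(\Delta^2) \ast F_r$ for $r \in \{0,1\}$.

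For the inductive step $n > 3$, if $e = 0$ then every edge lies in a triangle, so $K$ is itself 2-pure and we finish with $r = 0$. Otherwise I pick a maximal edge $(a,b)$ and apply Lemma \ref{lemma:reduce_maximal_edge}. If $K \setminus \{(a,b)\}$ is still connected, it has $n$ vertices and exactly $e - 1$ maximal edges, since removing $(a,b)$ (which was in no triangle) cannot change the maximality of any other simplex, and $\pi_1(K) \cong \pi_1(K \setminus \{(a,b)\}) \ast \ZZ$. The inner induction then supplies a 2-pure $K'$ on $n$ vertices with $\pi_1(K \setminus \{(a,b)\}) \cong \pi_1(K') \ast F_{r'}$, yielding $\pi_1(K) \cong \pi_1(K') \ast F_{r'+1}$. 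If instead $K \setminus \{(a,b)\}$ is disconnected, the lemma identifies $\pi_1(K)$ with $\pi_1(K_{12})$ for a connected complex $K_{12}$ on $n-1 \geq 3$ vertices, and the outer induction gives a 2-pure $K''$ on $n-1$ vertices with $\pi_1(K_{12}) \cong \pi_1(K'') \ast F_r$.

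To finish the bridge case I need to inflate $K''$ to $n$ vertices while keeping it 2-pure and without changing $\pi_1$. Since $K''$ is 2-pure on $\geq 3$ vertices it contains at least one triangle, hence an edge $(x,y)$; I would then attach a new vertex $v$ together with the triangle $(x,y,v)$ (and its two new edges $(x,v), (y,v)$). The resulting $K'$ is 2-pure on $n$ vertices, because the new vertex and edges are all faces of the new triangle; and it collapses back to $K''$ via the elementary collapse removing the free face $(x,v)$ and the triangle $(x,y,v)$, followed by the elementary collapse removing the free face $v$ of the remaining edge $(y,v)$. Hence $K' \simeq K''$ and $\pi_1(K') \cong \pi_1(K'')$, which gives $\pi_1(K) \cong \pi_1(K') \ast F_r$ and closes the induction.

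The main obstacle is precisely this bridge case, since Lemma \ref{lemma:reduce_maximal_edge} drops the vertex count there and a priori it is unclear how to recover the lost vertex while staying 2-pure. The triangle-attachment padding move above is the key technical device of the argument: it enlarges the complex one vertex at a time without altering $\pi_1$ and without introducing any new maximal edges.
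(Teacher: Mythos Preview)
Your proof is correct and follows essentially the same approach as the paper: pass to the $2$-skeleton and iteratively apply Lemma~\ref{lemma:reduce_maximal_edge} to strip off maximal edges. Your version is somewhat more careful than the paper's, which is rather terse; in particular, your triangle-padding move to restore a lost vertex in the bridge case is a detail the paper's proof does not spell out (and arguably needs, since the statement asks for $K'$ on exactly $n$ vertices).
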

\begin{proof}
If $K$ is $1$-dimensional, then $G$ is free and one can simply take  $K'$ to be a $2$-simplex.
Otherwise we may remove from $K$ all higher dimensional simplexes and all $2$-simplices with free edges,
as this doesn't affect its fundamental group.  If the resulting $2$-complex is not $2$-pure, then it has
a maximal edge, and so by Lemma \ref{lemma:reduce_maximal_edge} we can remove this edge and change
its fundamental group at most by a free factor. We may repeat this step until we get a $2$-pure
complex $K'$ satisfying the conclusion of the lemma.
\end{proof}

By the above it is sufficient to determine the fundamental groups of $2$-pure simplicial complexes on
$7$ vertices. There are $h_3(7) = 7013320$ isomorphism classes of such complexes, but some of them are
not connected and some already appear on $6$ vertices so we need to consider $7011181$ connected $2$-pure complexes on $7$ vertices. We used
SageMath to generate all relevant complexes and compute their fundamental groups. More specifically, we
used the \textit{hypergraphs.nauty} function \cite{nauty} to produce all such complexes, and for each of them we
formed \textit{SimplicialComplex} and computed its \textit{fundamental\_group()} using SageMath's
built-in functions. Finally, we further examined how many missing edges they have, obtaining a full
classification of fundamental groups for $7$ vertices.

\begin{theorem}\label{thm:n=7}
The following is the complete list of groups that arise as
fundamental groups of simplicial complexes on at most $7$ vertices.
\begin{itemize}
\item $F_n$, the free group on $n\in\{0,1,\ldots,15\}$ generators,
\item $\ZZ_{2}\ast F_n$, where $n\in\{0,1,\ldots,5\}$, or
\item $\ZZ\times\ZZ$ for the unique minimal triangulation of the torus $T=S^1\times S^1$.
\end{itemize}
\end{theorem}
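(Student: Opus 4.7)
The plan is to combine the structural reductions already established with an exhaustive computer-assisted enumeration. By Lemma \ref{lemma:2-pure}, every fundamental group of a connected simplicial complex on at most $7$ vertices has the form $G\ast F_r$, where $G=\pi_1(K')$ for some connected $2$-pure complex $K'$ on at most $7$ vertices. The cases of at most $6$ vertices are already completely described by Corollary \ref{corollary:n=6}, so the core task is to classify $\pi_1(K')$ for all connected $2$-pure complexes $K'$ on exactly $7$ vertices, and then to determine the range of admissible free-factor ranks $r$.

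First I would enumerate the relevant complexes. By Section \ref{sec:preliminary}, $2$-pure complexes are in bijection with $3$-uniform hypergraphs, and $h_3(7)=7013320$ gives the number of isomorphism classes. Using the canonical-labelling generator \textit{hypergraphs.nauty} from \cite{nauty}, one produces each class exactly once; after discarding disconnected complexes and complexes whose vertex support has size at most $6$ (already handled), $7\,011\,181$ complexes remain. For each of them I would form the corresponding \textit{SimplicialComplex} in SageMath and call \textit{fundamental\_group()}, which returns a finitely presented group already subjected to internal Tietze simplification. By computing the abelianization and checking low-dimensional invariants, each output is recognized as one of $\{1,\ZZ_2,\ZZ\times\ZZ\}$ together with a free factor; in particular $\ZZ\times\ZZ$ occurs only for the classical $7$-vertex Möbius triangulation of the torus.

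Once the non-free part $G$ is determined for every $2$-pure $K'$, the admissible free ranks $r$ are obtained by adding back the edges that lie in the complete graph $K_7$ but are missing from the $1$-skeleton of $K'$. By Lemma \ref{lemma:reduce_maximal_edge}, each such edge contributes a free factor $\ZZ$ as long as it can be added as a maximal edge and the complex remains connected. This yields the three ranges claimed: the maximum $r=15$ is realized by the edge-only complex $K_7^{(1)}$ (giving $F_{15}$); the maximum $r=5$ for $\ZZ_2\ast F_r$ comes from the minimal triangulation of $\RR P^2$ on $6$ vertices together with the $6$ edges joining the remaining vertex to the others (producing five independent extra loops); and the Möbius torus already has $1$-skeleton $K_7$, forcing $r=0$ for $\ZZ\times\ZZ$. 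All intermediate ranks are achieved by selectively removing maximal edges.

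The principal obstacle is purely computational: roughly $7\times 10^6$ complexes must be processed individually, and the resulting group presentations (on up to $15$ generators with up to $35$ relators) are not a priori easy to identify. The approach succeeds here because canonical hypergraph labelling prevents duplicate work and because the non-free groups that appear in this range are few and easily distinguished from $\pi_1(K')^{\mathrm{ab}}$ and $H_2(K';\ZZ)$. This is also precisely why extending the method to $8$ vertices requires the additional geometric reductions developed in Section \ref{sec:8_ver}.
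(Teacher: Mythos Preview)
Your proposal is correct and follows essentially the same route as the paper: reduce via Lemma~\ref{lemma:2-pure} to connected $2$-pure complexes on $7$ vertices, enumerate the $7\,011\,181$ isomorphism classes with \textit{hypergraphs.nauty}, compute each fundamental group in SageMath, and recover the admissible free ranks by counting missing edges. The only cosmetic discrepancy is that the paper calls the $7$-vertex torus the Cs\'asz\'ar torus rather than the M\"obius torus.
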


	\begin{table}[h]
		\begin{tabular}{c|c|c|c|c|c|c|c|c|c|c|c|c}
		
			$1$                           & $\ZZ$   & $F_2$  & $F_3$ & $F_4$ & $F_5$ & $F_6$ & $F_7$ & $F_8$ & $\ZZ_2$ & $\ZZ_2 * \ZZ$ & $\ZZ_2 * F_2$ & $\ZZ^2$ \\ \hline
			5413611 & 1264654 & 276999 & 48944 & 6094  & 463   & 29    & 1     & 1     & 350     & 31            & 3             & 1                            \\ 
		\end{tabular}
		\label{table:groups_for_2-pure_7_vertex}
		\vskip 10pt
		\caption{Distribution of fundamental groups among all $7011181$ isomorphism classes of connected $2$-pure simplicial complexes on $7$ vertices.}
	\end{table}

In fact, we can prove directly that only cyclic groups, up to free factor, and $\ZZ \times \ZZ$ arise as fundamental groups of simplicial complexes on at most $7$ vertices.
Note that the trivial group is cyclic of order $1$.
Therefore, our computer calculations can be used only to verify that group $\ZZ_2$ is the only finite cyclic group that occurs.

\begin{theorem}
Let $K$ be a connected simplicial complex on $7$ vertices. If $\pi_1(K)$ is not cyclic up to a free
factor, i.e. is not of the form $(\ZZ/H) \ast F_n$ for some $H \unlhd \ZZ$, then $K$ is
$2$-dimensional and is the unique minimal triangulation of $T^2$. In particular,
$\pi_1(K) = \ZZ \times \ZZ$.
\end{theorem}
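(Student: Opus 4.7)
The plan is to reduce $K$ step by step to a closed combinatorial 2-manifold and then invoke the classification of surfaces. First, by Lemma \ref{lemma:2-pure} I would replace $K$ by a 2-pure connected complex $K'$ on at most 7 vertices with $\pi_1(K)\cong\pi_1(K')\ast F_r$; if $K'$ had at most 6 vertices, Corollary \ref{corollary:n=6} would force $\pi_1(K')\in\{F_n,\ZZ_2\}$ and hence $\pi_1(K)$ cyclic up to a free factor, contradicting the hypothesis. So we may assume $K$ is 2-pure, 2-dimensional, connected, on exactly 7 vertices; the coincidence $K=K'$ follows a posteriori from the rigidity of the torus triangulation, whose 1-skeleton is already complete and which admits no non-trivial higher-dimensional extension.

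Next, I would analyse vertex links to force $K$ to be a closed 2-manifold. For any vertex $v$, let $A$ be the induced subcomplex of $K$ on the other six vertices. Since $K$ is 2-pure, $\lk(v)$ is a graph without isolated vertices. Applying Proposition \ref{split-suspension_and_cones}\ref{ssc-cone} with $B=\{v\}$ gives
\[
\pi_1(K)\;\cong\;\bigl(\pi_1(A)/H\bigr)\ast F_{r-1},
\]
where $r$ is the number of components of $\lk(v)$ and $H$ is the normal subgroup generated by the images of the fundamental groups of those components in $\pi_1(A)$. Corollary \ref{corollary:n=6} forces $\pi_1(A)$ to be free or $\ZZ_2$, and in either case the quotient $\pi_1(A)/H$ is cyclic; hence $r\geq 2$ would already make $\pi_1(K)$ cyclic up to a free factor. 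So $\lk(v)$ must be connected for every $v$, and also non-contractible (else deformation retracting the star of $v$ reduces us to $\leq 6$ vertices and Corollary \ref{corollary:n=6} applies). A further collapse argument removing pendant leaves in $\lk(v)$---each corresponds to a triangle of $K$ with a free edge---eventually reduces us to the situation where each $\lk(v)$ is a simple cycle.

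At this stage $K$ is a closed combinatorial 2-manifold on 7 vertices. Combining $2E=3F$ with $E\leq\binom{7}{2}=21$ gives $F\leq 14$ and $\chi(K)=7-F/2\geq 0$. The closed surfaces with $\chi\geq 0$ are $S^2$, $\RR P^2$, $T^2$ and the Klein bottle; by the results of Jungerman and Ringel, the Klein bottle requires at least 8 vertices in a simplicial triangulation, so it is excluded. Among $S^2$, $\RR P^2$, $T^2$, only the torus has a fundamental group that is not cyclic up to a free factor, and its 7-vertex triangulation is the well-known M\"obius triangulation, unique up to isomorphism. This yields $\pi_1(K)\cong\ZZ\times\ZZ$ and $K$ as claimed.

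The main obstacle will be the link-simplification step: in particular, ruling out the case when $\lk(v)$ is connected and contains a cycle together with hanging trees or bridges. The cleanest route is probably an induction on the number of edges in $\lk(v)$ not lying on any simple cycle, combined with repeated application of Proposition \ref{split-suspension_and_cones}, exploiting at each step the cyclicity of $\pi_1(A)/H$ to peel off a free factor without losing the hypothesis on $\pi_1(K)$.
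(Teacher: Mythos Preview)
Your central step contains a genuine error. You claim that since $\pi_1(A)$ is free or $\ZZ_2$ (by Corollary~\ref{corollary:n=6}), the quotient $\pi_1(A)/H$ is cyclic. This is false: every finitely presented group is a quotient of a free group. Indeed, for the torus triangulation itself, removing any vertex $v$ leaves a $6$-vertex complex $A$ homotopy equivalent to $S^1\vee S^1$, so $\pi_1(A)\cong F_2$, and the relevant quotient is exactly $\ZZ\times\ZZ$, which is not cyclic. Consequently your argument gives no constraint on the number of components of $\lk(v)$, and the reduction to ``every link is connected'' collapses. The subsequent simplification (removing pendant leaves to reach simple-cycle links) is also incomplete---as you note---since a connected link with no leaves can still have vertices of degree $\ge 3$; but this is moot given the earlier failure.

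The paper's proof avoids this trap by using a $4{+}3$ split rather than your $1{+}6$ split. With $A$ spanned by four vertices and $B$ by three, one has $\pi_1(A)$ and $\pi_1(B)$ genuinely cyclic (trivial or $\ZZ$), so Proposition~\ref{ssc-cone} does force the quotient to be cyclic. The non-cyclicity hypothesis then rules out contractibility of either piece, yielding $B=\partial\Delta^2$ and strong combinatorial constraints on $A$. From there the paper argues directly that $K$ is a $2$-incidence complex and reconstructs the Cs\'asz\'ar torus by hand. If you want to salvage a surface-classification approach, you need a different mechanism to force every edge into exactly two triangles; the $4{+}3$ splitting (applied to each edge and its two adjacent triangles) is what the paper uses for this.
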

\begin{proof}
Since the fundamental group depends only on the $2$-skeleton, assume for the moment that $K$ is
$2$-pure with no free edges (cf. Lemma \ref{lemma:2-pure_no_free_faces}). Thus each edge has at least
two incident triangles. Suppose $\{1,2,3,4,5,6,7\}$ are vertices of $K$ and, without loss of
generality, consider two triangles $124$ and $234$ with the common edge $24$. Let $A$ be the subcomplex
spanned by $\{1,2,3,4\}$ and let $B$ be the subcomplex spanned by $\{5,6,7\}$. If $B$ is not connected,
we can add edges so that $B$ becomes a tree, and this does not change the property that $\pi_1(K)$ is
not cyclic up to a free factor. But if $B$ is contractible, Proposition \ref{ssc-cone} implies that
$\pi_1(K)$ is, up to a free factor, the quotient of $\pi_1(A)$ that is cyclic, a contradiction.
	
Thus $B$ must be connected and non-contractible, so $B = \partial \Delta^2$ and $567$ is a non-triangle
of $K$. Also because of Proposition \ref{ssc-cone}, $A$ is not contractible. Thus $A$ must contain the
edge $13$ and cannot consist of exactly three triangles. Moreover, if $A$ consists of all four possible
triangles on four vertices (it is a tetrahedron), we can fill it with a $3$-simplex without changing
$\pi_1(K)$, and then again $A$ is contractible.
	
In summary, the pair of triangles $124$ and $234$ with the common edge $24$ forces the existence of
the edge $13$, the edges $56$, $67$ and $57$, and forbids the occurrence of triangles $567$, $123$
and $134$.
	
Let us show that $K$ is a \emph{$2$-incidence complex}, i.e., a $2$-pure complex in which every
edge is a face of exactly two triangles. Suppose to the contrary, that there are
triangles $124$, $234$ and $245$ with the common edge $24$. This leads to $9$ non-triangles.
Moreover, $K$ contains the edges $13$, $15$ and $35$, and each of them is incident to at least
two triangles:
\begin{itemize}
	\item $13a$ and $13b$, where $a,b \in \{5,6,7\}$, since $123$ and $134$ are non-triangles,
	\item $15c$ and $15d$, where $c,d \in \{3,6,7\}$, since $125$ and $145$ are non-triangles,
	\item $35e$ and $35f$, where $e,f \in \{1,6,7\}$, since $235$ and $345$ are non-triangles.
\end{itemize}
Suppose $a=5$ and, because of symmetry, we can assume that $b=6$. Then $156$ is a non-triangle and so
$c,d \neq 6$, hence $c=3$ and $d=7$. But then both $356$ and $357$ are non-triangles, so
$e,f \in \{1\}$, which contradicts the fact that $35$ is not free. Therefore $a,b \neq 5$, and, because
of symmetry, $c,d \neq 3$ and $e,f \neq 1$. Similarly, since $a=6$ and $b=7$, the triangle $156$ is
forbidden, so $c,d \in \{7\}$, a contradiction.
	
Now that we know that $K$ is a $2$-incidence complex we could easily show that its fundamental group
is a surface group up to a free factor. However, we will continue our geometric reasoning to  conclude
that $K$ is in fact the unique minimal triangulation of the torus.
	
Let us start with triangles $124$ and $234$ with the common edge $24$. Without loss of generality,
assume that the edge $13$ is incident to triangles $135$ and $136$. Next, because $156$, $356$ and
$567$ are non-triangles, the edge $56$ is incident to $256$ and $456$. Note that these three pairs of
triangles force the existence of all $\binom{7}{2} = 21$ possible edges, so $K$ has a complete
$1$-skeleton.

The edge $17$ is incident to $1a7$ and $1b7$, where $a,b \in \{2,4,5,6\}$, because $137$ is forbidden.
By symmetry, we can assume that $a=2$. Then the triangles $124$ and $127$ meet at $12$ and so $147$ is
a non-triangle. Therefore, without loss of generality, take $b=5$. This leads to another pair $135$ and
$157$ of incident triangles along the edge $15$. Thus the edges $15$ and $17$ are already incident with
two triangles, so $145$ and $167$ are non-triangles because $K$ is a $2$-incidence complex. So far, we
have the following $8$ triangles in $K$ (in the order in which they are considered):
	\[
	124, 234, 135, 136, 256, 456, 127, 157
	\]
	and $16$ non-triangles (in lexicographic order):
	\[
	123, 125, 134, 137, 145, 147, 156, 167, 245, 246, 247, 257, 346, 356, 357, 567.
	\]
	
	Next, consider the edge $14$ with incident triangle $124$. The second incident triangle must be $146$ as the others are forbidden. Therefore, $126$ is a non-triangle. Now, the vertex $1$ is joined by an edge with all other vertices and each of these edges already has two incident triangles, of which there are $6$ in total. The neighbourhood of $1$ in $K$ is homeomorphic to a disc, and, in fact, one could argue that this is true for every vertex. Since $K$ is a $2$-incidence complex, any non-vertex point of $K$ has also such a neighbourhood, so $K$ is a triangulation of a surface.
	We will not use this fact, and instead we will complete the proof by obtaining an explicit triangulation of the torus:
	\begin{itemize}
		\item The edge $57$ is incident to $157$, and the second triangle must be $457$. Thus $236$ is a non-triangle.
		\item The edge $36$ is incident to $136$, and the second triangle must be $367$.
		\item The edge $26$ is incident to $256$, and the second triangle must be $267$.
		\item The edge $67$ is incident to the triangles $267$ and $367$, so $237$ is a non-triangle.
		\item The edge $27$ is incident to the triangles $267$ and $127$, so $345$ is a non-triangle.
		\item The edge $37$ is incident to $367$, and the second triangle must be $347$. Thus $467$ is a non-triangle.
		\item The edge $35$ is incident to $135$, and the second triangle must be $235$.
	\end{itemize}
	
	In the end, we obtain $14$ triangles in $K$ and $21$ non-triangles, so we have considered all $\binom{7}{3} = 35$ possible triangles. Thus $K$ has the following triangulation:
	\[
	124, 127, 135, 136, 146, 157, 234,
	235, 256, 267, 347, 367, 456, 457,
	\]
	which is exactly the minimal triangulation of $T^2$, see Figure~\ref{csaszar_torus}.
	
	Note that, in this complex all edges are present, no tetrahedra can be added without first adding additional triangles and no edges can be removed without removing some triangles. We can conclude that this triangulation of the torus is the unique complex on $7$ vertices whose group is not cyclic up to a free factor, i.e., our initial assumption of $2$-purity is not needed for the conclusion to hold.
\end{proof}	

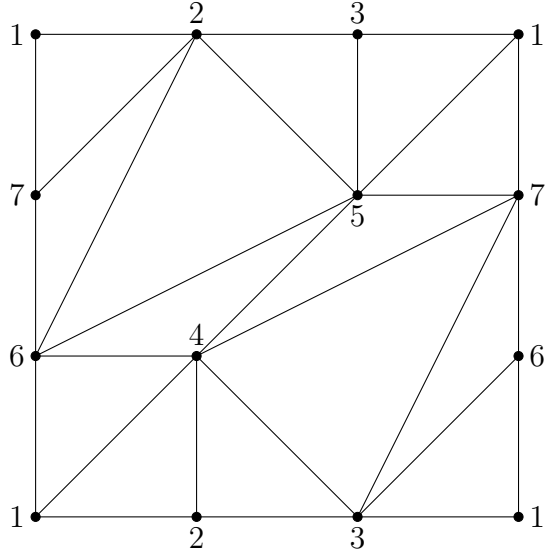
\begin{figure}[h]
	\centering
	
	\begin{tikzpicture}[scale=2.14]
		
		\filldraw (0,0) circle (0.8pt) node[left] {$1$};
		\filldraw (3,0) circle (0.8pt) node[right] {$1$};
		\filldraw (0,3) circle (0.8pt) node[left] {$1$};
		\filldraw (3,3) circle (0.8pt) node[right] {$1$};
		\filldraw (1,0) circle (0.8pt) node[below] {$2$};
		\filldraw (1,3) circle (0.8pt) node[above] {$2$};
		\filldraw (2,0) circle (0.8pt) node[below] {$3$};
		\filldraw (2,3) circle (0.8pt) node[above] {$3$};
		\filldraw (0,1) circle (0.8pt) node[left] {$6$};
		\filldraw (3,1) circle (0.8pt) node[right] {$6$};
		\filldraw (0,2) circle (0.8pt) node[left] {$7$};
		\filldraw (3,2) circle (0.8pt) node[right] {$7$};
		
		\filldraw (1,1) circle (0.8pt) node[above] {$4$};
		\filldraw (2,2) circle (0.8pt) node[below] {$5$};
		
		\draw (0,0) -- (0,1);
		\draw (0,0) -- (1,0);
		\draw (0,0) -- (1,1);
		
		\draw (2,0) -- (1,0);
		\draw (1,1) -- (1,0);
		
		\draw (0,2) -- (0,1);
		\draw (1,1) -- (0,1);
		\draw (2,2) -- (0,1);
		\draw (1,3) -- (0,1);
		
		\draw (2,0) -- (1,1);
		\draw (2,0) -- (3,0);
		\draw (2,0) -- (3,1);
		\draw (2,0) -- (3,2);
		
		\draw (3,2) -- (3,1);
		\draw (3,0) -- (3,1);
		
		\draw (2,2) -- (1,1);
		\draw (2,2) -- (3,2);
		\draw (2,2) -- (3,3);
		\draw (2,2) -- (2,3);
		\draw (2,2) -- (1,3);
		
		\draw (3,3) -- (3,2);
		\draw (1,1) -- (3,2);
		
		\draw (1,3) -- (0,3);
		\draw (1,3) -- (0,2);
		\draw (1,3) -- (2,3);
		
		\draw (0,2) -- (0,3);
		
		\draw (3,3) -- (2,3);

	\end{tikzpicture}
	\caption{The minimal triangulation of a torus --- Cs{\'a}sz{\'a}r torus.}\label{csaszar_torus}
\end{figure}

\section{Complexes on $8$ vertices}
\label{sec:8_ver}

By Table \ref{table:numbers_of_complexes} there are, up to isomorphism, approximately  $\!1.4\times10^{18}$ simplicial complexes on $8$ vertices,  so  we must reduce the number of relevant cases by several orders of magnitude before we feed the data to a computer. By Lemma \ref{lemma:2-pure} it is sufficient to determine the fundamental groups of pure $2$-dimensional simplicial complexes on $8$ vertices. Unfortunately, the number of these complexes is still huge, as $h_3(8) \sim\!1.8\times 10^{12}$, making a direct computation unfeasible.

\subsection{General structural properties.}\label{subsec:General structural properties}
We will achieve a reduction in several steps. To begin, we can assume that the complex has no free faces as these can be collapsed without changing the homotopy type.

\begin{lemma}\label{lemma:2-pure_no_free_faces}
If a connected $2$-pure complex $K$ on $n\geq 4$ vertices has a free face, then there exists a connected $2$-pure complex $K'$ on at most $n$ vertices and without free faces, such that $\pi_1(K) = \pi_1(K') * F_r$ for some $r\geq 0$.
\end{lemma}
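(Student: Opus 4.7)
The plan is to proceed by induction on the total number of simplices $|K|$. First, observe that in a $2$-pure complex every vertex lies in some triangle and hence in at least three simplices (the triangle together with two of its edges); consequently no vertex can be free, and a free face must be an edge $e=\{a,b\}$ contained in a unique triangle $t=\{a,b,c\}$. The pair $(e,t)$ forms an elementary collapse, so removing both produces a connected complex $L$ with $\pi_1(L)\cong\pi_1(K)$, the same vertex set as $K$, and $|K|-2$ simplices.

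The next step is to restore $2$-pureness. The edges $\{a,c\}$ and $\{b,c\}$ may have been contained only in $t$ inside $K$, in which case they become maximal edges of $L$. We iteratively apply Lemma \ref{lemma:reduce_maximal_edge}: each application either deletes a maximal edge while preserving connectedness and contributes a free factor $\ZZ$ (case (a)), or deletes the edge and identifies its two endpoints, reducing the vertex count by one (case (b)). The crucial observation is that removing an edge does not change the triangle-containment status of any other edge, so no new maximal edges are ever created; hence the set of maximal edges strictly shrinks at each step and the procedure terminates. Moreover, should an edge removal be about to leave a vertex isolated, that edge must be a bridge, so case (b) applies and merges the vertex away, preventing isolated vertices from ever persisting. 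We arrive at a connected $2$-pure complex $L''$ on at most $n$ vertices with $\pi_1(K)\cong\pi_1(L'')\ast F_{r_0}$ for some $r_0\geq 0$.

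If $L''$ has no free faces we set $K'=L''$ and $r=r_0$. Otherwise $|L''|\leq|L|=|K|-2<|K|$, so the inductive hypothesis produces a $2$-pure complex $K'$ on at most $|V(L'')|\leq n$ vertices, without free faces, satisfying $\pi_1(L'')\cong\pi_1(K')\ast F_{r_1}$; combining the two gives $\pi_1(K)\cong\pi_1(K')\ast F_{r_0+r_1}$, completing the induction.

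The main obstacle is the bookkeeping in the second step: one must justify that every call to Lemma \ref{lemma:reduce_maximal_edge} is legitimate (the current complex is connected and has a maximal edge) and that the cleanup process terminates with a genuinely $2$-pure complex rather than one harbouring orphaned vertices or fresh free faces introduced along the way. Both points rest on the single technical observation that edge deletion affects only the incidences of the deleted edge's endpoints, together with case (b) of the lemma, which absorbs any threatened isolated vertex by identification.
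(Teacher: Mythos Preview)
Your approach is essentially the paper's: collapse free faces, then clean up the resulting maximal edges via Lemma~\ref{lemma:reduce_maximal_edge}. The only organizational difference is that the paper performs \emph{all} collapses first and then a single cleanup pass, whereas you collapse one free edge, clean up, and recurse by induction on $|K|$. Your bookkeeping (no new maximal edges, no isolated vertices) is in fact more explicit than the paper's.

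There is, however, one genuine gap. Your inductive step can produce an $L''$ on only $3$ vertices. Concretely, take $K$ to be two triangles $\{1,2,3\},\{1,2,4\}$ sharing an edge: collapsing the free edge $\{1,3\}$ and then applying case~(b) of Lemma~\ref{lemma:reduce_maximal_edge} to the resulting maximal edge $\{2,3\}$ yields $L''=\Delta^2$ on three vertices. This $L''$ still has free faces, so you land in the ``otherwise'' branch, but the inductive hypothesis requires $n\geq 4$ and therefore does not apply. The paper handles exactly this degeneracy by noting that if the process terminates in a graph (or a point) the fundamental group is free, and one simply takes $K'=\partial\Delta^3$. You should insert the same escape hatch: whenever $|V(L'')|\leq 3$, the only connected $2$-pure possibility is $\Delta^2$, which is contractible, so $\pi_1(K)\cong F_{r_0}$ and $K'=\partial\Delta^3$ (which has $4\leq n$ vertices) does the job. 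With that one-line addition the induction goes through.
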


\begin{proof}
We perform simplicial collapses until there are no more free faces left. If the complex collapses onto a graph (possibly containing a single point), its fundamental group is free, so we may take $K'=\partial\Delta^3$. Otherwise, we are left with a connected complex $K''$ of dimension $2$, whose maximal faces are either edges or triangles. If removing a maximal edge does not disconnect the complex, we remove it: homotopically, this corresponds to removing an $S^1$ wedge summand, and therefore reducing the rank of the free part in the Grushko decomposition by $1$. We repeat this until the only edges remaining are the ones that disconnect the complex: each such edge may be contracted to a point, i.e. we can remove it and identify its endpoints. At the end of this process we are left with a complex $K'$ with the required properties.
\end{proof}

Choose a vertex $v_0$ of $K$ and divide the simplices of $K$
into those that contain $v_0$ and those that do not. Every simplex containing $v_0$ is a cone over a simplex spanned by the remaining  vertices, so we obtain the following result:
\begin{lemma}
Every simplicial complex $K$ on $n$ vertices can be decomposed as
$K=L\cup CA$, where $L$ is a complex on $n-1$ vertices, $A$
is a subcomplex of $L$ and $CA$ is the cone over $A$.
\end{lemma}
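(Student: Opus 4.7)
The plan is to realize the decomposition using the classical star/link splitting at a single vertex. Fix an arbitrary vertex $v_0$ of $K$ and partition the simplices of $K$ into two sets: those that contain $v_0$ and those that do not. I would then define
\[
L := \{\sigma\in K \mid v_0\notin\sigma\}, \qquad A := \lk_K(v_0) = \{\sigma\in K \mid v_0\notin\sigma,\ \sigma\cup\{v_0\}\in K\}.
\]
The first step is to verify that $L$ is a simplicial complex on the vertex set $\{v_1,\dots,v_{n-1}\}$: it is closed under taking faces because $K$ is, and by construction it uses only $n-1$ vertices. Next, $A\subseteq L$ since every $\sigma\in A$ omits $v_0$ and lies in $K$; and $A$ is closed under taking faces because if $\tau\subseteq\sigma$ and $\sigma\cup\{v_0\}\in K$, then $\tau\cup\{v_0\}\subseteq\sigma\cup\{v_0\}$ is also in $K$. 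Thus $A$ is a subcomplex of $L$.

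The second step is to identify the star $\st_K(v_0)$ with the cone $CA$ (with apex $v_0$). By the remark preceding the lemma, every simplex of $K$ containing $v_0$ has the form $\sigma\cup\{v_0\}$ for some $\sigma$ with $v_0\notin\sigma$; such a $\sigma$ belongs to $A$ by definition, and $\sigma\cup\{v_0\}$ is exactly the join $\sigma * v_0$. Conversely, every simplex of $CA$ is either a face $\sigma\in A\subseteq K$ or a join $\sigma*v_0$ with $\sigma\in A$, which by definition of $A$ lies in $K$. Hence $CA$ coincides with $\st_K(v_0)\cup A$, and in particular is contained in $K$.

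Finally I would combine the two pieces. Every simplex of $K$ either avoids $v_0$, in which case it lies in $L$, or contains $v_0$, in which case it lies in $CA$ by the previous paragraph; so $K\subseteq L\cup CA$. The reverse inclusion holds because both $L$ and $CA$ are subcomplexes of $K$. This gives $K=L\cup CA$ as required.

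I do not anticipate any real obstacle: the lemma is essentially a bookkeeping statement about the star--link decomposition at a vertex. The only mild subtlety is making sure that $A$ is nonempty only when $v_0$ is not an isolated vertex, but the statement remains correct in the degenerate case since then $CA$ is just the $0$-simplex $\{v_0\}$ and $L$ supplies the rest.
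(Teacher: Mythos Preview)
Your proposal is correct and follows exactly the same idea as the paper: the paper simply chooses a vertex $v_0$, splits the simplices of $K$ into those containing $v_0$ and those not, and notes that every simplex containing $v_0$ is a cone over a simplex on the remaining vertices. Your write-up is just a more detailed version of this one-line star/link argument.
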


Furthermore, if $K$ is $2$-pure with no free edges, then $L$ is also $2$-pure, otherwise the cone over a maximal edge would give a triangle in which this edge is free. Moreover, if $K$ is connected, we may assume that $L$ is connected due to the following argument. If $L$ is not connected, then $K$ is a wedge sum of two smaller complexes $K_1$ and $K_2$ with the only common vertex $v_0$. However, considering $K_1$ and $K_2$ abstractly, as disjoint complexes, and gluing them along an edge or a $2$-simplex yields a complex $K'$ on $n-1$ vertices, such that $\pi_1(K) \cong \pi_1(K')$. Thus, in this case no new fundamental group occurs. Additionally, if $L$ is $2$-pure on $7$ vertices and is not connected, then its components have free fundamental groups.

In addition to $L$ being connected, if we only care about fundamental groups up to a free factor, we can also assume that $A$ is connected: if $A$ has $r$ components $A_1,\ldots, A_r$, we can find a path consisting of edges in $L$ that connects two of these components. Adding these edges to $A$ has the effect of reducing the rank of the free group factor in $\pi_1(K)$ by $1$. We can repeat this procedure until $A$ becomes connected.

Proposition \ref{split-suspension_and_cones} easily applies to the splitting of a complex into a vertex and a connected subcomplex on the remaining vertices:

\begin{corollary}\label{corollary:quotient_when_coning}
Suppose $L$ is a connected simplicial complex and $A$ is a subcomplex of $L$ that has $r$ connected components.  Then there is a normal subgroup $H\unlhd \pi_1(L)$ such that
\[
\pi_1(L\cup CA)\cong\big(\pi_1(L)/H\big)\ast F_{r-1}.
\]
\end{corollary}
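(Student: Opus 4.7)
The plan is to invoke Proposition \ref{ssc-cone} directly, applied to the complex $K := L\cup CA$ with its vertex set partitioned into the vertices of $L$ on one side and the single new cone apex $v$ on the other. Under this partition, the subcomplex called ``$A$'' in the proposition's notation becomes $L$ itself, while ``$B$'' becomes $\{v\}$, which is trivially contractible. Hence the hypothesis of part (b) is satisfied, and the proposition yields $|L\cup CA|\simeq C_f$ for a certain map $f\colon Z\to L$, where $Z$ is a subcomplex of $L\times_{\mathrm{CW}}\{v\}\approx L$.

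Next I would identify $Z$ explicitly. By the construction in the proof of Proposition \ref{split-suspension_and_cones}, $Z=|K|\cap q^{-1}\{\tfrac12\}$ captures exactly the midpoint slices of those simplices of $K$ that have vertices both in $L$ and in $\{v\}$. In the complex $L\cup CA$, these simplices are precisely the cone simplices $v\ast\sigma$ with $\sigma\in A$, and intersecting each such simplex with the equator of the join produces a homeomorphic copy of $\sigma$. Consequently $Z$ is homeomorphic to $A$, and in particular has the same number of connected components, namely $r$.

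Since $L$ is connected by hypothesis, the final conclusion of part (b) applies verbatim and delivers
\[
\pi_1(L\cup CA)\cong\big(\pi_1(L)/H\big)\ast F_{r-1}
\]
for some normal subgroup $H\unlhd\pi_1(L)$, as required. The only step requiring any care is the identification $Z\cong A$, but this is a direct unwinding of the join construction underlying Proposition \ref{split-suspension_and_cones}; there is no real obstacle here, since the corollary is essentially part (b) specialized to the geometrically natural situation of coning off a subcomplex of $L$ from a fresh apex.
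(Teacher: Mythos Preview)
Your proof is correct and follows exactly the route the paper intends: the paper's own justification is the single sentence preceding the corollary, which says that Proposition \ref{split-suspension_and_cones} applies by splitting off the cone apex from the remaining vertices. Your write-up simply fills in the details of that application, including the identification $Z\cong A$ needed to match the $r$ in the corollary with the $r$ appearing in the proof of Proposition \ref{ssc-cone}.
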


\begin{remark}
Observe that $H$ is the normal closure of homotopy classes of loops in $L$ that are contained in $A$. This subgroup can be quite different from $\pi_1(A)$.
\end{remark}

In particular, if $\pi_1(L)$ is trivial, then
$\pi_1(L\cup CA)$ is free, and if $\pi_1(L)$ is cyclic, then
$\pi_1(L\cup CA)$ is also cyclic up to a free group factor in its Grushko decomposition.

\

In our computations we do not need to consider all possible subcomplexes $A\leq L$. If $A$ contains a $2$-simplex, the cone over this $2$-simplex is a $3$-simplex in $K$ that can be removed without
affecting the fundamental group. Therefore it is sufficient to take $A$  at most $1$-dimensional. Furthermore, if $L$ has a free edge, this edge will necessarily have to be included in $A$, as we assume that $K$ has no free edges. Finally, because $L$ and $A$ are connected and $A$ is at most $1$-dimensional, if there is a vertex of $L$ not in $A$, we can find an edge in $L$ that can be added to $A$ without changing $\pi_1(L \cup CA)$. Thus, the resulting complex after coning can have free edges, but these will be incident to the cone vertex $v_0$.
As a consequence we may assume that $A$ contains a spanning tree of $L$.

Summarizing all the above considerations, we obtain the following structural theorem.

\begin{theorem}\label{theorem:structural_theorem}
	Let $G$ be a finitely presented group that does not admit a nontrivial free group as a free factor, and let $n$ be the minimal integer such that  there exists a simplicial complex $K$ on $n$ vertices with $\pi_1(K) \cong G * F_{r'}$ for some $r'\geq 0$. Then there exist
\begin{itemize}
\item[1)] a connected, $2$-pure simplicial complex $L$ on $n-1$ vertices with $\pi_1(L) \neq 1$, and
\item[2)] a connected, $1$-dimensional subcomplex $A\le L$ containing a spanning tree of $L$ and all
free edges of $L$,
\end{itemize}
such that for some $r\geq 0$
	\[	  \pi_1( L \cup CA) \cong G * F_r.	\]
 Moreover, if $G$ is not cyclic, then we can additionally assume that $\pi_1(L)$ is not cyclic.
\end{theorem}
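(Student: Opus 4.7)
The plan is to start from an arbitrary simplicial complex $K$ on $n$ vertices realizing $\pi_1(K) \cong G \ast F_{r'}$ and to successively apply the reductions developed in Sections \ref{subsec:scx up to 7} and \ref{subsec:General structural properties}. The key principle is that minimality of $n$ forbids any modification that preserves the non-free part of $\pi_1(K)$ while strictly decreasing the vertex count.

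First I apply Lemma \ref{lemma:2-pure} and Lemma \ref{lemma:2-pure_no_free_faces} to replace $K$ by a $2$-pure complex on $n$ vertices without free faces, with fundamental group $G \ast F_{r''}$ for some $r'' \geq 0$; minimality forces the new vertex count to remain $n$. I then fix a vertex $v_0$ and write $K = L \cup CA$, where $L$ is the induced subcomplex on the remaining $n-1$ vertices and $A$ is the link of $v_0$. Two properties of $L$ follow immediately: $L$ is $2$-pure, because any maximal edge $e$ of $L$ would lie in the unique triangle $v_0 \ast e$ of $K$ containing $e$, making $e$ a free edge of $K$, contrary to assumption; and $L$ is connected, for otherwise $K$ would split as a wedge at $v_0$ of two pieces $K_1, K_2$, and re-gluing them by identifying an edge of each produces a complex on $n-1$ vertices with unchanged $\pi_1$ (gluing along a contractible subcomplex preserves $\pi_1$ by Seifert--van Kampen), contradicting minimality.

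Next I modify $A$ inside $L$ to achieve the remaining structural conditions. Any $2$-simplex $\sigma \in A$ gives a $3$-simplex $v_0 \ast \sigma$ in $K$ that can be removed without affecting $\pi_1$, so $A$ may be taken at most $1$-dimensional. Every free edge of $L$ must then lie in $A$, for otherwise it would be a free edge of $K$. Connectedness of $A$ is achieved iteratively: by Corollary \ref{corollary:quotient_when_coning}, each extra component of $A$ contributes a $\ZZ$ free factor to $\pi_1(K)$, and adding a connecting edge of $L$ (whose cone triangle introduces no new edges of $K$, since both endpoints already lie in $A$) kills exactly one such factor while leaving $G$ unchanged. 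Once $A$ is connected, any vertex of $L$ not in $A$ can be adjoined via an edge of $L$, whose cone introduces one new edge of $K$ (adding one $\ZZ$ to $\pi_1$) together with a triangle killing it again, so $\pi_1$ is unaffected. After these steps $A$ is a connected spanning $1$-subcomplex of $L$, hence contains a spanning tree. Finally, $\pi_1(L) \neq 1$, for otherwise Corollary \ref{corollary:quotient_when_coning} would give $\pi_1(K) = 1$, contradicting the nontriviality of $G$; and for the "moreover" clause, the same corollary yields $\pi_1(K) \cong \pi_1(L)/H$ for some $H \unlhd \pi_1(L)$, so if $\pi_1(L)$ were cyclic then $G \ast F_r$ would be cyclic, forcing $G$ cyclic.

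The principal subtlety lies in the sequencing of the reductions: each established property must be preserved by subsequent ones. Since $L$ itself is left untouched by the $A$-modifications and by the removal of $3$-simplices, the conditions on $L$ are fixed once the vertex $v_0$ is chosen. The modifications to $A$ are then performed in the order (i) delete $2$-simplices from $A$, (ii) include all free edges of $L$, (iii) connect $A$ and extend it to span all vertices of $L$; under this sequence no earlier condition is re-violated, and the theorem follows.
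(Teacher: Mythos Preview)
Your proposal is correct and follows essentially the same approach as the paper: the theorem is stated there as a summary of the preceding discussion in Section~\ref{subsec:General structural properties}, and your reductions (pass to $2$-pure without free faces via Lemmas~\ref{lemma:2-pure} and~\ref{lemma:2-pure_no_free_faces}, split off a vertex $v_0$, deduce $2$-purity and connectedness of $L$, trim $A$ to dimension~$1$, absorb free edges of $L$, connect and extend $A$ to a spanning subgraph, then invoke Corollary~\ref{corollary:quotient_when_coning}) match the paper's step by step. Your explicit attention to the ordering of the $A$-modifications and your justification that each step preserves the non-free part of $\pi_1$ are, if anything, slightly more careful than the paper's informal discussion; note also that once $K$ is $2$-pure your step~(i) is vacuous, since the link of $v_0$ is then automatically at most $1$-dimensional.
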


\subsection{Algorithm and implementation}

Based on the structural theorem, we can devise an algorithm that determines all possible fundamental groups of simplicial complexes with at most $n$ vertices.
We will use the following notation:

\begin{itemize}
	\item  $\Pure{n}$ --- the set of representatives of all isomorphism classes of connected $2$-pure simplicial complexes $L$ on $n$ vertices such that $\pi_1(L) \neq 1$,
	\item $G_n$ --- the set of representatives of all isomorphism classes of groups $G$ without a nontrivial free factor, such that $\pi_1(K) \cong G * F_r$ for some $K \in \Pure{n}$ and $r \geq 0$.
\end{itemize}

\RestyleAlgo{ruled}
\begin{algorithm}
	\caption{Determination of all fundamental groups of complexes on $n$ vertices.}\label{alg:one}
	\KwData{$n \geq 4$, $\Pure{n-1}$}
	\KwResult{$G_n$}
	$G_n \gets \varnothing$
	
	\ForAll{ $L \in \Pure{n-1}$}{
	
	$A \gets $ all free edges of $L$
	
	$E \gets [e_1,\ldots,e_k]$ --- the list of all edges of $L$ that are not free
	
	EXTEND($L$,$A$,$E$,$0$)
	}	
	\Return $G_n$
\end{algorithm}

\begin{algorithm}
	\caption{EXTEND($L$,$A$,$E$,$i$) --- Recursive extension of the complex}\label{alg:two}
	\KwData{$L$ --- connected $2$-pure simplicial complex, \\
	$A$ --- subcomplex of $L$, \\
	$E$ --- list of non-free edges of $L$.}
	
	\If{$i > {\rm SIZE}(E)$}{\Return}
	
	EXTEND($L$,$A$,$E$,$i+1$)
	
	$A \gets A \cup E[i]$
	
	\If{$A$ contains a spanning tree of $L$}
	{
	$G \gets \pi_1(L \cup CA)$

	\If{ $G \neq 1$}
	{
		\If{$G \notin G_n$ (up to isomorphism)}{
		$G_n \gets G_n \cup \{G\}$}
		
		EXTEND($L$,$A$,$E$,$i+1$)
	}
	}
	
\end{algorithm}

\ \\
The correctness of Algorithm \ref{alg:one} can be deduced from Theorem \ref{theorem:structural_theorem} as follows:

\begin{lemma}
	If $G$ is a finitely presented group without a free group factor such that $G \ast F_r$ is the fundamental group of a simplicial complex on $n$ vertices, then the above algorithm finds a group $G \ast F_{r'}$ for some $r'$.
\end{lemma}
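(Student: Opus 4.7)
The plan is to apply Theorem~\ref{theorem:structural_theorem} to a minimal-vertex realization of $G$ and then verify that the recursion in Algorithm~\ref{alg:two} actually visits the pair $(L,A)$ that the structural theorem supplies. First I would reduce to the minimal case: let $n_0\le n$ be the smallest integer for which some simplicial complex has fundamental group of the form $G\ast F_{r''}$; this exists by the hypothesis that such a complex lives on $n$ vertices. If $G=1$ then $G\ast F_{r'}=F_{r'}$ is realized by a $1$-dimensional complex at the base of the iteration, so we may assume $G\neq 1$.

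Applied to $n_0$, Theorem~\ref{theorem:structural_theorem} produces $L\in\Pure{n_0-1}$ together with a connected $1$-dimensional subcomplex $A\le L$ that contains a spanning tree of $L$ and every free edge of $L$, and satisfies $\pi_1(L\cup CA)\cong G\ast F_{r''}$. When Algorithm~\ref{alg:one} is invoked with parameter $n_0$ its outer loop iterates over every element of $\Pure{n_0-1}$, and in particular encounters this specific $L$. For that $L$ the algorithm initialises $A_0$ as the subcomplex spanned by the free edges of $L$ and lists the non-free edges as $E=[e_1,\dots,e_k]$. A short induction on $k-i$ shows that the call EXTEND$(L,A_0,E,0)$ reaches every subcomplex of the form $A_0\cup S$ with $S\subseteq E$, because at depth $i$ the procedure first recurses without $e_i$ and then adjoins $e_i$ and recurses again. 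Since the target $A$ is $1$-dimensional and contains every free edge of $L$, we may write $A=A_0\cup S^{\ast}$ for some $S^{\ast}\subseteq E$, so some recursive branch arrives at exactly this $A$. At that branch, the test that $A$ contains a spanning tree of $L$ holds by construction, the algorithm computes $\pi_1(L\cup CA)\cong G\ast F_{r''}$, and, this group being nontrivial, adds it to $G_{n_0}$ unless it is already present.

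The genuine difficulty is conceptual rather than combinatorial: Theorem~\ref{theorem:structural_theorem} is what turns an infeasible search over all $\sim\!10^{18}$ complexes on $8$ vertices into a tractable search over pairs $(L,A)$ with $L$ a $2$-pure complex on $n_0-1$ vertices and $A$ of the tightly constrained shape above. Once that reduction is in place, the correctness of EXTEND reduces to a routine induction on the recursion depth, and the isomorphism check ``$G\notin G_n$'' is treated as a black-box call to a group-theoretic oracle. A minor subtlety is that the algorithm records the group $G\ast F_{r''}$ rather than its freely indecomposable core $G$, but this matches the lemma's conclusion, which only asks for the weaker assertion ``$G\ast F_{r'}$ for some $r'$''. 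One also has to handle as a trivial base case the degenerate situation in which $L$ has no non-free edges, so that the list $E$ is empty and EXTEND does nothing; in that case the target $A$ coincides with $A_0$ and is recorded directly in the outer loop.
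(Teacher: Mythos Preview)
Your proof follows the paper's approach—invoke Theorem~\ref{theorem:structural_theorem} to obtain a pair $(L,A)$ and then check that the recursion in Algorithm~\ref{alg:two} visits this pair—and is in fact more careful than the paper's one-line argument, since you first pass to the minimal vertex number $n_0$, which is what the structural theorem actually requires. One overstatement to flag: you claim EXTEND reaches \emph{every} $A_0\cup S$ because ``at depth $i$ the procedure first recurses without $e_i$ and then adjoins $e_i$ and recurses again'', but as written the second recursion is gated by both ``$A$ contains a spanning tree'' and ``$G\neq1$''; the latter prune is harmless along the path to your target $A$ (each prefix yields a group surjecting onto $G\ast F_{r''}\neq1$), while the former appears to be a pseudocode artifact (the Remark following the lemma indicates the only intended prune is on trivial $\pi_1$), so your argument is sound for the intended algorithm and matches the paper's level of rigor on this point.
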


\begin{proof}
	During the execution of the algorithm, we consider each extension of the connected $2$-pure complex $L$ on $n-1$ by coning over a subcomplex $A$ of $L$ that contains a spanning tree of $L$ and all its free edges. By Theorem \ref{theorem:structural_theorem} we are sure to get a complex with the desired fundamental group.
\end{proof}

\begin{remark}
	Of course, there are many possible ways of searching all subcomplexes $A$. The advantage of
	considering them in the inclusion-preserving manner is the fact that we stop further extensions if
	$\pi_1(Y \cup CA) =1$, since then for any $A'$ containing $A$, $\pi_1(Y \cup CA')$ is a quotient of
	$\pi_1(Y \cup CA)$, so it is also trivial. This remark allows a significant reduction of
the number of cases that need to be computed.
\end{remark}

Since the number of cases that we need to check is still very large, we wrote a program in C++ \cite{c++git} that
extremely accelerated calculations compared to SageMath. However, we still need to
reduce the presentation of the group obtained from the simplicial complex and to recognise it. Our
program is not a universal tool for the recognition of fundamental groups of complexes in C++ at
the moment. It is adapted to the specific groups obtained from complexes on $8$ or $9$ vertices in the
course of successive calculations. However, it has the potential for further development.

The algorithm described above has been implemented faithfully in C++ and differs only in technical issues and the need to execute particular steps.

\subsection{Computations and results for $n=8$}

From the $7011181$ connected $2$-pure complexes on $7$ vertices only $1597570$ of them have a nontrivial fundamental group. The number of extensions of any such complex is bounded above by the number $2^{\binom{7}{2}} = 2 097 152$ of all subsets of its edges, so there are at most
\[
2^{\binom{7}{2}} \cdot 1597570  \sim\,3.35 \times 10^{12}
\]
fundamental groups to compute in total, while $h_3(8) \sim\!1.8\times 10^{12}$ which shows
that this is a very coarse estimate.

However, it turned out that in our implementation it was sufficient to calculate
$\sim\!1.45 \times 10^{10}$ groups. This is a more reasonable number, but still too big to simply
determine these groups in SageMath. Calculations made in C++ using a custom-made structure for finitely
presentable groups \cite{c++git} turned out to be sufficient. They took $562$ CPU hours which lasted about $2.5$ days
on $9$ threads using Intel Core i5-1334U. The result is easily reproducible. Among the
group presentations obtained in this way only $258$ could not be recognised by our structure in C++, so they were saved
separately
and checked in SageMath.

The following theorem provides a complete list of groups that arise
as fundamental groups of simplicial complexes on at most $8$ vertices.
The explicit triangulations realizing these groups can be found in \cite{dataset}.

\begin{theorem}\label{thm:n=8}
	
If $K$ is a connected simplicial complex on at most $8$ vertices, then
there exists an integer $r$, $0\leq r\leq 21$, such that
\[\pi_1(K)\cong G\ast F_r,\]
where $G$ is one of the following groups:
\begin{itemize}
	\item $\ZZ_2$, $\ZZ_3$, $\ZZ_4$,
	\item $\ZZ\times\ZZ$, $\ZZ\rtimes\ZZ$ (fundamental groups of the torus and the Klein bottle, respectively),
	\item $B_3$ (the braid group on $3$ strands or
	the trefoil knot group),
\end{itemize}
$F_r$ is a free group of rank $r$ and
the range of admissible ranks $r$ depends on $G$ and admits the value $0$ in every case.
	
\end{theorem}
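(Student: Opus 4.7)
The plan is to combine the structural reduction of Theorem~\ref{theorem:structural_theorem} with a computer-assisted enumeration following Algorithms~\ref{alg:one} and~\ref{alg:two}, together with explicit constructions to certify the realizability side. First, by Lemma~\ref{lemma:2-pure} it suffices to classify, up to a free factor, the fundamental groups of connected $2$-pure simplicial complexes on at most $8$ vertices. For $n\leq 7$, Theorem~\ref{thm:n=7} already handles everything: the only non-free parts that appear are $\ZZ_2$ (from the minimal triangulation of $\RR P^2$) and $\ZZ\times\ZZ$ (from the C\'asz\'ar torus). So the work concentrates on $n=8$, and by Theorem~\ref{theorem:structural_theorem}, every such complex $K$ whose fundamental group $G$ has no nontrivial free factor arises as $L\cup CA$, where $L\in\Pure{7}$ and $A\leq L$ is a connected $1$-dimensional subcomplex containing a spanning tree and all free edges of $L$.

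To implement the exhaustive search, first run over the $1597570$ isomorphism classes in $\Pure{7}$ (obtained by filtering the $7011181$ connected $2$-pure complexes on $7$ vertices using SageMath's \texttt{hypergraphs.nauty}). For each such $L$, invoke the recursive routine EXTEND of Algorithm~\ref{alg:two}, adding non-free edges one at a time to build candidate subcomplexes $A$; a van Kampen presentation of $\pi_1(L\cup CA)$ is then produced by taking the standard presentation of $\pi_1(L)$ and killing the boundary words of the new conical $2$-cells, which by Corollary~\ref{corollary:quotient_when_coning} realizes $\pi_1(L\cup CA)$ as a quotient of $\pi_1(L)$ by a normal subgroup, modulo a free factor. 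A crucial pruning is that if $A\subseteq A'$, then $\pi_1(L\cup CA')$ is a quotient of $\pi_1(L\cup CA)$, so any branch that reaches the trivial group may be abandoned. This is what reduces the estimated $\sim\!3.35\times 10^{12}$ upper bound to the $\sim\!1.45\times 10^{10}$ actual presentations processed.

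The main obstacle is the recognition of groups up to isomorphism, since this is undecidable in general. The plan is to work with a small, fixed target list $\{\ZZ_2,\ZZ_3,\ZZ_4,\ZZ\times\ZZ,\ZZ\rtimes\ZZ,B_3\}$, plus the groups $\ZZ_2\ast F_k$ and $\ZZ\times\ZZ$ inherited from $n=7$, and to test each generated presentation against these candidates. Concretely, one simplifies each presentation by Tietze transformations in the custom C++ library \cite{c++git}, computes the abelianization (this separates $\ZZ_m$, $\ZZ^2$, $\ZZ\rtimes\ZZ$, and $B_3$ via $H_1$ alone), and then uses fingerprints such as orders of torsion elements, presentations of known knot groups, and Grushko decomposition heuristics to identify the non-abelian cases. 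Any of the $258$ presentations not resolved by C++ is exported and treated in SageMath/GAP. The outcome of this procedure is that no group outside the stated list ever appears.

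For the converse, I exhibit explicit $8$-vertex triangulations realizing each group: the minimal triangulation of $\RR P^2$ with a disjoint edge gives $\ZZ_2$; a lens-space-type $2$-complex on $8$ vertices realizes $\ZZ_3$ and $\ZZ_4$ (e.g.\ a presentation $2$-complex of $\langle a\mid a^m\rangle$ flagged by a pseudo-manifold construction); the Cs\'asz\'ar torus plus an apex yields $\ZZ\times\ZZ$; the $8$-vertex Klein bottle from \cite{dataset} realizes $\ZZ\rtimes\ZZ$; and an $8$-vertex triangulation of the trefoil knot complement (as discussed in Section~\ref{sec:8_ver}) realizes $B_3$. Finally, the bound $r\leq 21$ is immediate: the $1$-skeleton of the $7$-simplex is a connected graph with $V=8$ and $E=\binom{8}{2}=28$, so its fundamental group is $F_{28-7}=F_{21}$, and adding any $2$-simplex can only decrease the free rank of $\pi_1$; the admissibility of $r=0$ for each listed $G$ follows from the explicit triangulations above. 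The precise ranges of $r$ for each $G$ are tabulated as a by-product of the computation and are archived in \cite{dataset}.
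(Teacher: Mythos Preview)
Your proposal is correct and follows essentially the same approach as the paper: it applies the structural reduction of Theorem~\ref{theorem:structural_theorem} to reduce to cones $L\cup CA$ with $L\in\Pure{7}$, runs the exhaustive search of Algorithms~\ref{alg:one} and~\ref{alg:two} with the same pruning, handles group recognition via the C++ library \cite{c++git} with the residual $258$ presentations exported to SageMath, and certifies realizability via the explicit triangulations in \cite{dataset}. The only minor imprecisions are that the Cs\'asz\'ar torus already realizes $\ZZ\times\ZZ$ on $7$ vertices (no apex needed, since the statement allows at most $8$), and your free-rank computation $28-7=21$ should be read as $E-(V-1)$ rather than $E-V$.
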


\begin{remark}
	The calculations can be significantly reduced if we choose to focus on non-cyclic groups. By Proposition \ref{corollary:quotient_when_coning} it is sufficient to consider $L \in \Pure{7}$ such that $\pi_1(L)$ is not cyclic. There are $332566$ such complexes. Moreover, in the function EXTEND, instead of $G\neq 1$ one can check if $G$ is not cyclic. Thanks to these restrictions, we only need to consider $1.58 \times 10^8$ cases and groups to calculate. The calculations take about 2 hours and 20 minutes on a single thread running on Intel Core i5-1334 using a C++ program \cite{c++git}.  Let us note that by considering all extensions of such complexes $L$ we get about $4.5\times 10^9$ possible cones, so there is a clear advantage to using our algorithm.
	
	The number of complexes in this approach is small enough the full computation can be done in Mathematica, and indeed, this was our original approach to the problem. The Mathematica implementation is also available online, see \cite{mathematica-git}. Since all isomorphism classes of $3$-uniform hypergraphs on $7$ vertices are considered (rather than only those that do not occur on $6$ vertices), we obtain $332710$ complexes on $7$ vertices that have non-cyclic fundamental groups. As above, these lead to about $4.5\times 10^9$ cones. Each of these is then analyzed by checking all possible splittings into two parts with $4$ vertices each. If one of these parts contains $\geq 2$ triangles and the other part contains $\geq 3$ triangles, with respect to any such splitting, the complex can be discarded, since its group will be cyclic up to a free factor by Proposition \ref{ssc-cone}. These checks are purely combinatorial, so they are relatively fast, but the reduction is significant, as we are left with only $3807843$ complexes to consider. The fundamental groups of these complexes can then be computed directly in SageMath, leading to only $1336$ distinct presentations with nontrivial relations, each of which can be checked to represent one of the groups in Theorem \ref{thm:n=8}. The computation took about $3$ days using a single thread on an Intel Core i7-7600U.
\end{remark}

The theorem immediately yields new values of the Karoubi--Weibel invariant:
\begin{corollary}
	Let $G$ be a group without nontrivial free factors
	in its Grushko decomposition. Then $\KW(G) = 8$ if and only if $G$ is isomorphic to one of the following groups:
	\[
	\ZZ_3,\ \ZZ_4,\ \ZZ\rtimes\ZZ,\ B_3.
	\]
\end{corollary}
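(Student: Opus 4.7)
The plan is to combine Theorem \ref{thm:n=8} with Theorem \ref{thm:n=7} through the bridge given by the identification $\KW(G)$ equals the minimum number of vertices of a simplicial complex $K$ with $\pi_1(K)\cong G$ (this is exactly the content of \cite[Theorem 1.2]{GMP} used throughout the paper). Under this identification, computing $\KW(G)$ for $G$ without free factors reduces to locating $G$ on the lists of fundamental groups appearing for $n\le 7$ and $n\le 8$ vertices. The main conceptual step is the application of the uniqueness part of the Grushko decomposition, which will let us pass from a factorisation $\pi_1(K)\cong H\ast F_r$ on the simplicial side to the intrinsic assertion $G\cong H$.

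First I would verify that each of the non-free groups appearing in Theorems \ref{thm:n=7} and \ref{thm:n=8}, namely $\ZZ_2,\ZZ_3,\ZZ_4,\ZZ^2,\ZZ\rtimes\ZZ$ and $B_3$, is freely indecomposable and not isomorphic to $\ZZ$. For the three finite cyclic groups this is immediate since any nontrivial free product of nontrivial groups is infinite. For $\ZZ^2$ it follows from the fact that a nontrivial free product is either nonabelian or has torsion; for the Klein bottle group and $B_3$ it follows from the observation that each has nontrivial centre, while a nontrivial free product $A\ast B$ has trivial centre. Once this is in place, the uniqueness of the Grushko decomposition forces the following statement: if $G$ has no nontrivial free factors and $\pi_1(K)\cong H\ast F_r$ with $H$ freely indecomposable, then $G\cong H$ and $r=0$ (and the trivial group arises only when $\pi_1(K)$ is free).

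Next I would read off the two relevant lists. Theorem \ref{thm:n=8} tells us that the groups without free factors realisable as $\pi_1(K)$ on at most $8$ vertices are exactly
\[
\{1,\ \ZZ_2,\ \ZZ_3,\ \ZZ_4,\ \ZZ^2,\ \ZZ\rtimes\ZZ,\ B_3\},
\]
so this is exactly the set of $G$ (without free factors) with $\KW(G)\le 8$. Theorem \ref{thm:n=7} gives the analogous list for $n\le 7$, namely $\{1,\ZZ_2,\ZZ^2\}$, with $\KW(1)=1$, $\KW(\ZZ_2)=6$ (minimal triangulation of $\RR P^2$), and $\KW(\ZZ^2)=7$ (the Cs\'asz\'ar torus). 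Intersecting the first list with the complement of the second yields
\[
\{G\ \text{without free factors}\ :\ \KW(G)=8\}=\{\ZZ_3,\ \ZZ_4,\ \ZZ\rtimes\ZZ,\ B_3\},
\]
which is the content of the corollary. The realisations on $8$ vertices come directly from the explicit triangulations referenced in \cite{dataset}, giving $\KW(G)\le 8$ for each of the four groups, while the lower bound $\KW(G)\ge 8$ follows from their absence in the $n\le 7$ list.

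There is essentially no obstacle in this argument beyond the verification of freely indecomposability for the six groups, which is standard. All the heavy lifting has already been done in Theorems \ref{thm:n=7} and \ref{thm:n=8}; the corollary is a clean bookkeeping consequence of those two classifications together with Grushko uniqueness.
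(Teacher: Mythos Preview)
Your argument is correct and is exactly the intended one: the paper does not give a separate proof but simply records the corollary as an immediate consequence of Theorems~\ref{thm:n=7} and~\ref{thm:n=8}, and your write-up spells out precisely the bookkeeping (Grushko uniqueness plus free indecomposability of the six listed groups) that makes this immediate. The only thing one might trim is the explicit verification of free indecomposability, which the paper takes for granted, but including it does no harm.
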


\begin{example}
	Let us describe a simplicial complex on $8$ vertices whose fundamental group is isomorphic to $B_3$, the braid group on $3$ strands or equivalently, the trefoil knot group. The list of maximal simplices is
	\begin{align*}	
	[[0, 1, 4], [0, 1, 7], [0, 2, 3], [0, 2, 5], [0, 3, 4], [0, 5, 6], [0, 6, 7], [1, 2, 3], [1, 2, 4], [1, 3, 6],\\
	[1, 5, 6], [1, 5, 7], [2, 4, 7], [2, 5, 7], [3, 4, 5], [3, 5, 7], [3, 6, 7], [4, 5, 6], [4, 6, 7]]
	\end{align*}	
schematically represented in Figure \ref{figure:complex_3_braid_group}.

	\begin{figure}[h]
		\includegraphics[width=295pt]{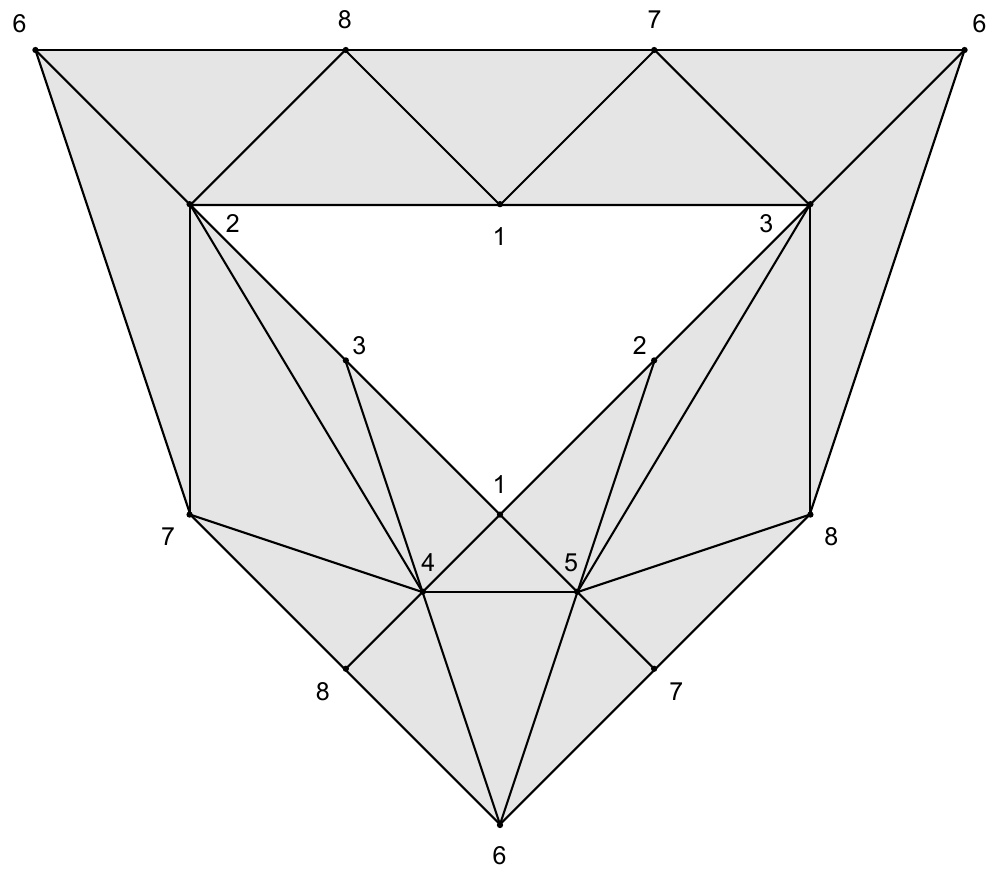}
		\caption{Simplicial complex on $8$ vertices with the fundamental group isomorphic to the braid group on $3$ strands.}
		\label{figure:complex_3_braid_group}
	\end{figure}

\end{example}

\begin{proposition}
	The complex $K$ presented in Figure \ref{figure:complex_3_braid_group} is a deformation retract of the complement of the trefoil knot $3_1$. Therefore the covering type of $\RR^3 \setminus 3_1$ is equal to $8$.
\end{proposition}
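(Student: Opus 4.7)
The plan is to recognize $K$ as a $K(B_3,1)$-space and thereby identify it, up to homotopy, with the trefoil knot complement, from which the covering-type formula follows. Theorem~\ref{thm:n=8} already provides $\pi_1(K)\cong B_3$, and the complement $\RR^3\setminus 3_1$ is itself a $K(B_3,1)$: by Papakyriakopoulos's Sphere Theorem any nontrivial knot complement in $S^3$ is an irreducible $3$-manifold with infinite $\pi_1$ and hence aspherical. Thus, once $K$ is shown to be aspherical, uniqueness of $K(\pi,1)$-spaces up to homotopy forces $K\simeq \RR^3\setminus 3_1$, and the homotopy equivalence can be realized concretely by an embedding $K\hookrightarrow\RR^3\setminus 3_1$ together with a deformation retraction onto its image.

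To establish asphericity of $K$, I would exploit that $B_3=\langle a,b\mid aba=bab\rangle$ is a one-relator group whose relator is not a proper power. By Lyndon's identity theorem, the standard presentation $2$-complex $P$ (with one vertex, two edges, and one $2$-cell) is aspherical, and a direct count giving $\chi(K)=8-27+19=0=\chi(P)$ is consistent with $K\simeq P$. The cleanest route is to produce a simple-homotopy equivalence $K\to P$ via an explicit sequence of elementary simplicial collapses and expansions, reducing the presentation coming from $K$ to the two-generator one-relator presentation of $B_3$. A more geometric variant, directly matching the wording of the proposition, is to embed $K$ as a subcomplex of a triangulation of the knot exterior $E(3_1)$ and to collapse $E(3_1)$ onto $K$ along free faces until only the prescribed 19 triangles on 8 vertices remain.

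Once $K\simeq\RR^3\setminus 3_1$ is in hand, the covering-type identity follows from the identification of $\ct(X)$ with the minimal number of vertices in a homotopy triangulation of $X$ (see \cite[Theorem 1.2]{GMP}): the existence of $K$ gives the upper bound $\ct(\RR^3\setminus 3_1)\le 8$, while Theorem~\ref{thm:n=8} provides the matching lower bound, since $B_3$ is not realized as the fundamental group of any simplicial complex on fewer than $8$ vertices. The principal obstacle is precisely the asphericity/deformation-retract step: an abstract simple-homotopy argument has to navigate a long Tietze transformation between two very different presentations of $B_3$, while the direct geometric version requires placing the $19$ triangles on $8$ well-chosen points of $\RR^3\setminus 3_1$ and performing a careful simplicial collapse of a regular neighborhood of $K$ onto $K$. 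In either case the verification is finite but combinatorially delicate and is well suited to computer assistance.
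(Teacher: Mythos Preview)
Your proposal is a plan rather than a proof: the decisive step --- asphericity of $K$ --- is never established. You note correctly that $\chi(K)=0=\chi(P)$ and that the one-relator presentation complex $P$ is aspherical by Lyndon, but matching Euler characteristics does not by itself force $K\simeq P$; whether every finite $2$-complex with $\pi_1\cong B_3$ and $\chi=0$ is aspherical is precisely the sort of question (in the orbit of Wall's $D(2)$-problem) that cannot be taken for granted without argument. Your two suggested routes --- an explicit simple-homotopy from $K$ to $P$, or embedding $K$ as a spine of the knot exterior and collapsing --- would each suffice, but neither is carried out, and you yourself flag this as the ``principal obstacle.'' Moreover, even granting asphericity, uniqueness of $K(\pi,1)$ yields only a homotopy equivalence; upgrading to the \emph{deformation retract} asserted in the statement still needs a concrete embedding, which you also leave as a promissory note.

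The paper sidesteps all of this with a direct geometric identification: one checks that the $19$ triangles assemble into a triangulation of the space $X_{2,3}$ of \cite[Example~1.24]{Hatcher} (the cylinder $S^1\times[0,1]$ with one boundary circle wrapped twice and the other three times onto itself), and Hatcher already exhibits an explicit deformation retraction of $\RR^3\setminus 3_1$ onto $X_{2,3}$. This reduces the first sentence of the proposition to a finite combinatorial verification of a homeomorphism, rather than a homotopy-theoretic argument, and the ``deformation retract'' wording then holds literally. The covering-type conclusion is obtained exactly as you propose, pairing the $8$-vertex triangulation with the lower bound from Theorem~\ref{thm:n=8}.
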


\begin{proof}
 The complex $K$ is a triangulation of the space $X_{2,3}$, where $X_{m,n}$ is homeomorphic to the quotient of $S^1 \times [0,1]$ under the identifications $(x,0) \sim (x+1/m,0)$ and $(x,1) \sim (x+1/n,1)$ for $x \in S^1 = [0,1]/\{0,1\}$. The space is described in Example 1.24 of A.~Hatcher's book \cite{Hatcher}, where it is shown that there is a deformation retraction from $\RR^3 \setminus 3_1$ onto $X_{2,3}$.
 By Theorem \ref{thm:n=8} the minimal number of vertices for which there is a complex that realizes the trefoil group is $8$, therefore
  the covering type of $K$ is $8$.
\end{proof}

To conclude, let us consider the implications of Theorem \ref{thm:n=8}
for the Bj\"orner--Lutz conjecture
\cite[Conjecture 6]{Bjorner-Lutz} that we mentioned in the
Introduction. Assume that $K$ is a simplicial complex on $n$ vertices
whose geometric realization is homeomorphic to the Poincar\'e
homology sphere. Since $K$ is $3$-dimensional, there are
four vertices in $K$ spanning a simplex in $K$. Let
$K'$ denote the subcomplex of $K$ spanned by the remaining
$n-4$ vertices. Applying 
Remark \ref{removing-a-ball}, we obtain
$\pi_1(K')=\pi_1(K)$, the binary icosahedral group $\SL(2,5)$.
Since $\SL(2,5)$ does not appear in the list given in Theorem
\ref{thm:n=8}, we conclude that $K'$ has at least $9$ vertices.
Thus we have:
\begin{theorem}\label{thm:Poincare}
	The number of vertices in any triangulation of the Poincar\'e sphere $P$, or any other $3$-manifold that is a nontrivial homology sphere, is at least $13$.
\end{theorem}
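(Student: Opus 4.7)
The plan is to reduce the statement to Theorem \ref{thm:n=8} by excising an open ball from the triangulation, following the sketch outlined just before the statement. Let $K$ be any triangulation of a nontrivial homology $3$-sphere $M$ on $n$ vertices. Since $K$ is a $3$-manifold, it contains at least one top-dimensional simplex $\sigma=v_1v_2v_3v_4$. Let $K'$ be the subcomplex of $K$ induced on the remaining $n-4$ vertices; I will argue that $\pi_1(K')\cong\pi_1(K)=\pi_1(M)$ and that this forces $n-4\geq 9$.

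The first step is to establish the isomorphism $\pi_1(K')\cong\pi_1(M)$. Here I would invoke Remark \ref{removing-a-ball}: since $M$ is a $3$-manifold, all of its simplicial triangulations are automatically combinatorial (by \cite[Proposition 3.11]{Friedl-etAl}), and $\sigma$ is a combinatorial $3$-ball. Hence $K\setminus\operatorname{int}\sigma$ admits a PL collar homeomorphic to $S^2$, which is simply connected, so Seifert--van Kampen identifies $\pi_1(K)$ with $\pi_1(K\setminus\operatorname{int}\sigma)=\pi_1(K')$.

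The second step is to show that $\pi_1(M)$ cannot appear as the fundamental group of a simplicial complex on at most $8$ vertices. Because $M$ is a nontrivial homology sphere, $\pi_1(M)$ is nontrivial and perfect (its abelianization is $H_1(M;\ZZ)=0$). A perfect group cannot admit a nontrivial free group as a free factor in its Grushko decomposition, since such a factor would survive in abelianization. Thus if $\pi_1(K')\cong G\ast F_r$ as in Theorem \ref{thm:n=8}, the Grushko-indecomposable factor $G$ would have to equal $\pi_1(M)$ itself. But the list of non-free indecomposable factors appearing in Theorem \ref{thm:n=8} consists of $\ZZ_2,\ZZ_3,\ZZ_4,\ZZ\times\ZZ,\ZZ\rtimes\ZZ,B_3$, each of which has nontrivial abelianization and so is not perfect. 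This contradiction forces $n-4\geq 9$, i.e.\ $n\geq 13$. For the Poincar\'e sphere specifically, the same conclusion follows since $\SL(2,5)$ is perfect (and explicitly absent from the list).

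The argument is essentially complete once the two ingredients above are combined; the only mildly subtle point, which I would treat carefully, is ensuring that Remark \ref{removing-a-ball} applies \emph{to arbitrary} simplicial triangulations of a homology $3$-sphere rather than only to PL ones. This is exactly where the $d\leq 4$ combinatoriality result from \cite{Friedl-etAl} is needed, and I regard it as the main (though minor) obstacle to a fully self-contained proof.
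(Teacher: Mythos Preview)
Your proposal is correct and follows exactly the approach the paper uses: remove a top $3$-simplex, invoke Remark~\ref{removing-a-ball} (using that $3$-manifold triangulations are automatically combinatorial) to identify $\pi_1(K')\cong\pi_1(M)$, and then rule out all groups from Theorem~\ref{thm:n=8} because a nontrivial perfect group cannot appear as $G\ast F_r$ for any $G$ on that list. Your added detail---making the perfectness argument explicit and treating the general homology-sphere case rather than just $\SL(2,5)$---is a useful elaboration of the paper's sketch; the only cosmetic slip is that the PL collar is $S^2\times[0,1]$ rather than $S^2$, but this does not affect the argument.
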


The result improves the previous estimate  of Bagchi and Datta \cite{Bagchi-Datta} who proved that the minimal number of vertices
required to triangulate the Poincar\'e sphere is at least $12$.

We may also observe that the above estimate can be improved if
the minimal triangulation $K$ admits a nontrivial bistellar move
(see \cite{Bagchi-Datta} and \cite{Bjorner-Lutz}). In that case one can find two
$3$-dimensional simplices in $K$ that intersect in a $2$-dimensional
face and such that the opposite vertices do not form an edge in $K$.
Let $K'$ denote the subcomplex of $K$ spanned by the remaining $n-5$ vertices. Then we may again apply 
Remark \ref{removing-a-ball}
to show that $\pi_1(K')=\pi_1(K)$. Since $K'$ has at least $9$
vertices, we may conclude that $K$ must contain at least $14$ vertices.

\section{Further applications}

\label{sec:applications}

\subsection{Complexes on $9$ vertices}

The numbers $r_9 \approx 7.89\times 10^{35}$, of isomorphism classes of simplicial complexes, and $h_3(9) \approx 5.33\times 10^{19}$, of isomorphism classes of $2$-pure simplicial complexes on $9$ vertices, are drastically larger than the corresponding numbers for $8$ vertices. Moreover, our geometric reductions summarized in Theorem \ref{theorem:structural_theorem}, as well as the algorithm based on them, require the use of the set $\Pure{n}$ of (isomorphism classes of) connected $2$-pure complexes on $n$ vertices with nontrivial fundamental groups. Since the objective of this work was to determine the fundamental groups of complexes on $8$ vertices without determining the set $\Pure{8}$, we cannot use this set to obtain a classification of groups for $9$ vertices. Furthermore, even if we had this set, our preliminary computations indicate that its size would be of order at least $10^9$. Together with the number of extensions of each such complex by attaching a cone along a subcomplex, which is bounded above by $2^{\binom{8}{2}} \approx 2.7 \cdot 10^8$, the number of complexes would still be too large for calculation.

One natural approach we could try instead is to perform double coning on a complex with $7$ vertices. This eliminates the issue of computing the set $\Pure{8}$, but the number of possible extensions of these complexes by two vertices is again too large for exhaustive computation. Furthermore, it is not obvious whether we can assume that the $7$-vertex complex is $2$-pure. This is certainly an area for further investigations.

Due to the greater complexity and multitude of presentations of fundamental groups of complexes on $9$ vertices, as indicated by our preliminary calculations (see examples below), we believe that their complete classification may be very difficult to achieve. Apart from the need for additional geometric reductions, it may be very important to focus on a class of groups of a specific type, e.g. only perfect groups, to achieve at least another step towards the Bj\"orner--Lutz conjecture.

Since we have given a complete classification of groups for $8$ vertices, every other group that occurs as the fundamental group of a complex on $9$ vertices must automatically have the Karoubi--Weibel invariant equal to $9$.

Below we provide the results of our experiments using our approach or obtained as extensions of $2$-incidence complexes on $9$ vertices. The particular triangulations can be found in \cite{dataset}.

\begin{theorem} \label{result:9_vertices}
	The following groups $G$ have $\KW(G) = 9$:
	\begin{itemize}
		\item $\ZZ_m$ for $m\in \{5,6,7,8,9\}$,
		\item $\ZZ_2 \ast \ZZ_2 = D_\infty$ (infinite dihedral group),
		\item $\ZZ_2 \ast \ZZ_3$,
		\item $\ZZ \times \ZZ_m$ for $m\in \{2,3\}$,
		\item $D_6$ (dihedral group of order $6$),
		\item $Q_8$ (quaternion group of order $8$),
		\item $BS(2,1)$, $BS(2,2)$, $BS(2,-2)$, $BS(3,1)$, $BS(3,-1)$ (Baumslag--Solitar groups),
		\item $\pi_1(M_2)$ (the fundamental group of a closed orientable surface of genus $2$),
		\item $\pi_1(N_g)$ for $g\in \{3,4,5\}$ (the fundamental group of a closed non-orientable surface of genus $g$),
		\item $\pi_1(X_{2,4}) = \langle\, a,\, b\, |\, a^4 = b^2\, \rangle$ (see \cite[Example 1.24]{Hatcher}).
	\end{itemize}
\end{theorem}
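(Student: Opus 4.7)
The proof has two components, one of which is immediate and the other primarily computational.

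For the lower bound $\KW(G) \ge 9$, I would appeal directly to Theorem \ref{thm:n=8}. That theorem enumerates every group that arises as $\pi_1(K)$ for $K$ on at most $8$ vertices, all of the form $G' \ast F_r$ with $G' \in \{1, \ZZ_2, \ZZ_3, \ZZ_4, \ZZ\times\ZZ, \ZZ \rtimes \ZZ, B_3\}$; by Grushko's theorem the non-free part of the decomposition is determined up to isomorphism. It then suffices to verify, for each of the groups $G$ in the statement, that $G$ is not isomorphic to any of these seven options. This is a routine case analysis using standard group invariants: order, abelianization, presence of torsion, center, and so on. For instance, $Q_8$ and $D_6$ are non-abelian of orders that do not match the list; $\ZZ_m$ for $m \ge 5$ has the wrong order; and the surface, Baumslag--Solitar and $X_{2,4}$ cases are infinite non-cyclic and are easily distinguished from $\ZZ \times \ZZ$, $\ZZ \rtimes \ZZ$ and $B_3$ (for example $\pi_1(M_2)$ has trivial center, unlike any of those three).

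For the upper bound $\KW(G) \le 9$, I would exhibit, for each $G$ in the list, an explicit $9$-vertex simplicial complex $K$ with $\pi_1(K) \cong G$. The complexes are constructed along the lines of Theorem \ref{theorem:structural_theorem}: take a carefully chosen connected $2$-pure complex $L$ on $8$ vertices together with a $1$-dimensional subcomplex $A \le L$ containing a spanning tree, and cone off $A$. The choice of $L$ and $A$ is guided by a short presentation of $G$, so that the loops in $A$ that get nullhomotoped in $L \cup CA$ realize exactly the relators one wants, via Corollary \ref{corollary:quotient_when_coning}. For example, $\ZZ_m$ for $m = 5,\ldots,9$ arises by wrapping a suitable boundary cycle $m$ times; $D_6$ and $Q_8$ come from presentations whose relators have total length at most ten; the surface groups $\pi_1(M_2), \pi_1(N_g)$ come from complexes where the triangles of $L$ encode the standard polygonal relator; and the Baumslag--Solitar groups $BS(m,n)$ together with $X_{2,4} = \langle a, b \mid a^4 = b^2\rangle$ require a single longer relator word realized by a triangulated annular portion of $L$. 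The explicit triangulations are tabulated in \cite{dataset}, and for each one $\pi_1(K)$ can be computed as an edge-path group and simplified by Tietze transformations (with SageMath or the C++ implementation of \cite{c++git}) to recover the claimed $G$.

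The main obstacle is discovery, not verification: as noted in the discussion preceding the theorem, both $|\Pure{8}|$ and the number of relevant subcomplexes $A$ are too large for exhaustive enumeration, even after the reductions of Theorem \ref{theorem:structural_theorem}. Hence the strategy has to be opportunistic: I would restrict the search to extensions of $2$-incidence complexes on $9$ vertices -- natural generalizations of the Cs{\'a}sz{\'a}r torus of Section \ref{sec:up_to_7_ver} -- and to extensions of those $L \in \Pure{8}$ whose fundamental groups admit a natural surjection onto the target $G$. This produces a realization on $9$ vertices for each listed group, which combined with the lower bound gives $\KW(G) = 9$; it does not rule out further groups with Karoubi--Weibel invariant equal to $9$, which is consistent with the fact that Theorem \ref{result:9_vertices} is stated as a list of examples rather than a complete classification.
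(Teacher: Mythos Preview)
Your proposal is correct and follows essentially the same approach as the paper: the lower bound $\KW(G)\ge 9$ is immediate from Theorem~\ref{thm:n=8} together with Grushko uniqueness, and the upper bound comes from explicit $9$-vertex triangulations (found, as the paper says, ``using our approach or obtained as extensions of $2$-incidence complexes on $9$ vertices'' and recorded in \cite{dataset}). You supply more narrative detail about the search heuristics than the paper does, but the structure of the argument is the same.
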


\begin{example}
	The dihedral group $D_6$ of order $6$ and the quaternion group $Q_8$ of order $8$ are the first (and for now the only known) finite non-cyclic groups which occur as the fundamental groups of simplicial complexes on $9$ vertices. Triangulations that yield $D_6$ are not unique, here is an example with $34$ edges and $26$ triangles:
	\begin{align*}
		[[0, 1, 4], [0, 1, 7], [0, 2, 3], [0, 2, 5], [0, 3, 4], [0, 5, 6], [0, 6, 7], [1, 2, 3], [1, 2, 4], [1, 3, 6],\\
		[1, 4, 8], [1, 5, 6], [1, 5, 7], [1, 6, 8], [2, 3, 8], [2, 4, 6], [2, 4, 7], [2, 5, 7], [2, 6, 8], [3, 4, 5], \\
		[3, 5, 7], [3, 6, 7], [3, 7, 8], [4, 5, 6], [4, 6, 7], [4, 7, 8]].
	\end{align*}
	This complex is an extension of the above-presented complex whose fundamental group is~$B_3$ (Figure \ref{figure:complex_3_braid_group}).
	
	A complex realizing $Q_8$ is given as follows:
	\begin{align*}
		[[0, 1, 2], [0, 1, 3], [0, 1, 6], [0, 2, 3], [0, 2, 7], [0, 4, 5], [0, 4, 6], [0, 4, 7], [0, 5, 6], [0, 5, 8],\\
		[0, 7, 8], [1, 2, 3], [1, 3, 4], [1, 4, 5], [1, 4, 8], [1, 5, 7], [1, 6, 7], [1, 6, 8], [2, 3, 5], [2, 4, 5],\\
		[2, 4, 8], [2, 5, 6], [2, 6, 7], [2, 7, 8], [3, 4, 6], [3, 4, 7], [3, 5, 7], [3, 5, 8], [3, 6, 8]].
	\end{align*}
\end{example}

\subsection{Aspherical spaces}

We would like to point out the relationship of our considerations to aspherical spaces.
 Often invariants of a group $G$ (e.g. its topological complexity) are defined as homotopy invariants of its Eilenberg--MacLane space $K(G,1)$ which is unique up to homotopy type. Thus one can define the covering type $\ct(G)$ of $G$ to be $\ct(K(G,1))$. If this number is finite, $G$ is called geometrically finite. For example, $\ZZ_2$ is not geometrically finite, since we have:
 \[
 \KW(\ZZ_2)=\ct(\RR P^2)=6,\qquad\text{while}\qquad\ct(K(\ZZ_2,1))=\ct(\RR P^{\infty})=\infty.
 \]
 In general, $\KW(G) \leq \ct(G)$. However, when $G$ is geometrically finite, one might expect these numbers to be more closely related, so the question of precise relationship between them becomes important. In particular, when does the equality $\KW(G) = \ct(K(G,1))$ hold? It is not clear even for surface groups. Recall that E. Borghini and E.\,G. Minian \cite{Borghini-Minian} showed that the covering type of a closed surface $S$ is equal to the number of vertices in its minimal triangulation with only one exception, the orientable surface $M_2$ of genus $2$, for which $\ct(M_2)$ is one less. In general, the existence of a~non-aspherical complex $K$ such that $\pi_1(K) \cong \pi_1(S)$ and with fewer than $\ct(S)$ vertices cannot simply be ruled out without proof. Our computations up to $9$ vertices confirm that $\KW(\pi_1(S)) = \ct(\pi_1(S))$ if $0 \geq \chi(S) \geq -3$, thus covering the three exceptional cases in \cite{Borghini-Minian}.

However, $\ct(K(G,1))$ can be much bigger than $\KW(G)$ for higher-dimensional spaces $K(G,1)$ (for groups of higher cohomological dimension). For example, consider $G = \ZZ^n$ for which $K(G,1) = T^n$ is an $n$-dimensional torus. Since the Lusternik--Schnirelmann category of $T^n$ is ${\rm cat}(T^n) = n+1$, by \cite[Theorem 2.2]{GMP} we get
\[
\ct(T^n) \geq \frac{1}{2}{\rm cat}(T^n) ( {\rm cat}(T^n)+1) = \frac{(n+1)(n+2)}{2},
\]
and so it is at least quadratic in $n$. On the other hand, an explicit construction provided by F. Frick and M. Superdock \cite{Frick-Superdock} for $n\geq 7$ yields that
\[
\KW(\ZZ^n)\leq 4n+(-1)^{n+1},
\]
so it is at most linear in $n$. We were able to construct explicit complexes (available at \cite{dataset}) showing that
\[
\KW(\ZZ^3)\leq 11,\quad\KW(\ZZ^4)\leq 13,\quad\KW(\ZZ^5)\leq 19\quad\text{and}\quad\KW(\ZZ^6)\leq23.
\]
This shows that the Frick--Superdock upper bound is in fact valid for all $n\in\NN$. 
Furthermore, combined with the above quadratic bound for $\ct(T^n)$, we can therefore conclude that
\[
\KW(\ZZ^n) < \ct(K(\ZZ^n,1))
\]
for all $n\geq 4$. For $n=3$ by Theorem \ref{thm:n=8}, the bound in \cite[Example 3.9]{GMP} and classical triangulation of $T^3$ with $15$ vertices \cite{Kuehnel-Lassmann} we get 
\[
9 \leq \KW(\ZZ^3) \leq 11 \leq \ct(K(\ZZ^3,1)) \leq 15.
\]

\subsection{PL triangulations of manifolds}

Recall that a homology sphere is a manifold that has the same integral homology as a sphere.
Our computations imply that every nontrivial $3$-dimensional homology sphere needs at least $13$ vertices to triangulate. This can be used to improve criteria from \cite{Pavesic} to recognize whether a triangulation of a manifold is combinatorial, thus giving it the structure of a PL manifold. Recall that a triangulation is combinatorial if the link of every 
vertex is PL homeomorphic to a sphere.

In fact, our considerations for the Poincar\'e sphere apply to combinatorial triangulations of homology spheres of any dimension $d\geq 3$. By the Hurewicz theorem the fundamental group $\pi_1(S)$ of such a homology sphere $S$ is a perfect group. If $S$ has a combinatorial triangulation with $n$ vertices, we can apply Remark \ref{removing-a-ball} to remove a $d$-simplex and obtain a simplicial complex $L$ on $n-d-1$ vertices such that $\pi_1(L)\cong\pi_1(S)$. Since there are no perfect groups on $8$ vertices, we get the following:
\begin{corollary}\label{corollary: triangulations of homology spheres}
	A combinatorial triangulation of a nontrivial homology $d$-sphere requires at least $d+10$ vertices.
\end{corollary}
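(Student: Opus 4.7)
The plan is to run the same argument used for the Poincar\'e sphere in Theorem \ref{thm:Poincare}, but in arbitrary dimension $d\geq 3$, and then bottleneck the result through Theorem \ref{thm:n=8}. Let $S$ be a nontrivial homology $d$-sphere equipped with a combinatorial triangulation $K$ on $n$ vertices. The first observation is the group-theoretic constraint on $\pi_1(K)=\pi_1(S)$: by the Hurewicz theorem, $H_1(S;\ZZ)=0$ forces $\pi_1(S)$ to be perfect; nontriviality of $S$ means $\pi_1(S)\ne 1$ (a simply connected homology $d$-sphere is homeomorphic to $S^d$ by the topological Poincar\'e conjecture, which holds in all dimensions $d\geq 3$).

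For the geometric reduction I would pick any $d$-simplex $\sigma$ of $K$, which exists because $K$ is pure of dimension $d$, and regard it together with its faces as a combinatorial $d$-ball $B$ on $d+1$ vertices. Let $L$ be the subcomplex of $K$ spanned by the remaining $n-d-1$ vertices. Since $K$ is combinatorial and $d\geq 3$, Remark \ref{removing-a-ball} applies directly and yields
\[
\pi_1(L)\cong\pi_1(K\setminus B)\cong\pi_1(K)\cong\pi_1(S),
\]
so $L$ realizes the nontrivial perfect group $\pi_1(S)$ on $n-d-1$ vertices.

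The last step is to invoke Theorem \ref{thm:n=8}: the fundamental group of any complex on at most $8$ vertices has the form $G\ast F_r$ with $G\in\{1,\ZZ_2,\ZZ_3,\ZZ_4,\ZZ\times\ZZ,\ZZ\rtimes\ZZ,B_3\}$, whose abelianization equals $G^{\mathrm{ab}}\oplus\ZZ^r$. Each nontrivial $G$ on this list has nonzero abelianization ($\ZZ_m$ itself for the cyclic groups, $\ZZ^2$ for the torus group, $\ZZ\oplus\ZZ_2$ for the Klein bottle group, and $\ZZ$ for $B_3$), so the only perfect group realized on at most $8$ vertices is trivial. Therefore $L$ cannot have at most $8$ vertices; equivalently $n-d-1\geq 9$ and $n\geq d+10$.

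I do not expect a genuine obstacle: the substantive work is already encapsulated in Theorem \ref{thm:n=8} and Remark \ref{removing-a-ball}, and what remains is a short verification. The only point that warrants any care is confirming that every nontrivial entry in the list from Theorem \ref{thm:n=8} has nonzero abelianization, so that no new perfect fundamental group can sneak in on $\leq 8$ vertices; this is the elementary computation displayed above.
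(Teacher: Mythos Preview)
Your proof is correct and follows essentially the same route as the paper: use Hurewicz to see that $\pi_1(S)$ is perfect, apply Remark \ref{removing-a-ball} to remove a $d$-simplex and obtain a complex $L$ on $n-d-1$ vertices with $\pi_1(L)\cong\pi_1(S)$, then invoke Theorem \ref{thm:n=8} to rule out nontrivial perfect groups on $\leq 8$ vertices. You add two details the paper leaves implicit --- the appeal to the topological Poincar\'e conjecture to justify $\pi_1(S)\neq 1$, and the explicit abelianization check on the list from Theorem \ref{thm:n=8} --- but the argument is otherwise identical.
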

Hence our result actually improves, by $1$, Bagchi and Datta's result \cite[Corollary 5]{Bagchi-Datta} for combinatorial $\ZZ$-homology spheres (see also \cite[Corollary 3]{Bagchi-Datta} for the case where $S$ is a $\ZZ_2$-homology sphere). Furthermore, our method is quite different.

This can be used to recognize combinatorial triangulations of manifolds. Namely, if all links of vertices in a simplicial complex are homology spheres, by counting their vertices we can, in some range, deduce whether they are standard spheres. We can also use the fact that for dimension $d \leq 4$ all triangulations of closed $d$-manifolds are combinatorial, see \cite[Proposition 3.11]{Friedl-etAl}. 
This mainly leads to an improvement in dimensions $4$ and $5$:

\begin{corollary}[{cf. \cite[Theorem 1.5]{Pavesic}}]\label{corollary:combinatorial_triangulations}
	Let $K$ be a $4$-dimensional (resp. $5$-dimensional) simplicial complex such that the link of every	vertex is a $3$-dimensional (resp. $4$-dimensional) homology sphere that has at most $12$ (resp. $13$) vertices. Then $K$ is a combinatorial triangulation of a $4$-dimensional (resp. $5$-dimensional) PL manifold.
\end{corollary}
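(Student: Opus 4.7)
The plan is to verify, for each vertex $v\in K$, that the link $L:=\lk_K(v)$ is PL homeomorphic to the standard sphere $S^{d-1}$, where $d\in\{4,5\}$. Once this is established for every $v$, the complex $K$ is by definition a combinatorial triangulation, and hence realizes a PL $d$-manifold structure.

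First, I would translate the hypothesis into combinatorial language: $L$ is a homology $(d-1)$-sphere with at most $d+8$ vertices, that is, at most $12$ when $d=4$ and at most $13$ when $d=5$. In particular $L$ triangulates a closed manifold of dimension $d-1\leq 4$, so by \cite[Proposition 3.11]{Friedl-etAl} this triangulation is automatically combinatorial. Thus $L$ is already a combinatorial triangulation of a homology $(d-1)$-sphere.

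Next, I would invoke Corollary \ref{corollary: triangulations of homology spheres} in dimension $d-1$: any combinatorial nontrivial homology $(d-1)$-sphere requires at least $(d-1)+10=d+9$ vertices. Since $L$ has strictly fewer than $d+9$ vertices, it cannot be nontrivial, so it must be PL homeomorphic to the standard $S^{d-1}$. For $d=4$ this is exactly the content of Theorem \ref{thm:Poincare} applied at dimension $3$; for $d=5$ it uses the $d+10$ bound for homology $4$-spheres. Applied at every vertex of $K$, this gives the desired conclusion.

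There is no substantial obstacle here beyond invoking the newly-proved vertex bounds and the low-dimensional combinatoriality principle from \cite{Friedl-etAl}. The only mildly delicate point worth flagging is the identification of a ``trivial'' PL homology $(d-1)$-sphere with the standard PL $S^{d-1}$, which in dimension $3$ is automatic via Moise's theorem combined with the Poincar\'e conjecture, and in dimension $4$ is built into the convention of Corollary \ref{corollary: triangulations of homology spheres} (whose proof rules out exactly those links with nontrivial, hence perfect, fundamental group).
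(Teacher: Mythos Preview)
Your argument is correct and follows exactly the route the paper sketches in the paragraph preceding the corollary: use \cite[Proposition~3.11]{Friedl-etAl} to see that each link, being a closed manifold of dimension $\le 4$, is automatically combinatorially triangulated, and then apply Corollary~\ref{corollary: triangulations of homology spheres} in dimension $d-1$ to rule out nontrivial homology spheres under the stated vertex bounds. Your closing caveat is well placed: the passage from ``$\pi_1=1$'' to ``PL standard $S^4$'' in the $5$-dimensional case implicitly invokes the open $4$-dimensional PL Poincar\'e conjecture, a subtlety the paper (like \cite{Pavesic}) does not address.
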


\begin{remark}
	Note that the criterion in \cite[Theorem 1.5]{Pavesic} requires at most $9$ vertices for $3$-dimensional homology spheres and $12$ vertices for $4$-dimensional homology spheres. The same criterion can also be used in simplicial complexes of higher dimension to verify whether links of simplices are combinatorial spheres, but we only get an improvement in codimensions $4$ and $5$.
\end{remark}

\end{document}